\theoremstyle{plain}
\newtheorem{theorem}{Theorem}
\newtheorem{lemma}[theorem]{Lemma}
\theoremstyle{definition}
\theoremstyle{remark}
\title{Diagonal Ramsey numbers in multipartite graphs related to stars}
\author{Chula Jayawardene\\
\small  Department of Mathematics\\[-0.8ex]
\small  University of Colombo, Sri Lanka.\\
\small\tt c\_jayawardene@yahoo.com\\
}
\date{}
\begin{document}

\maketitle
\begin{abstract}
Abstract: Let the star on $n$ vertices, namely $K_{1,n-1}$ be denoted by $S_n$. If every two coloring of the edges of a complete balanced multipartite graph $K_{j \times s}$ there is a copy of $S_n$ in the first color or a copy of $S_m$ in the second color, then we will say $K_{j \times s}\rightarrow (S_n,S_m)$. The size Ramsey multipartite number $m_j(S_n, S_m)$ is the smallest natural number $s$ such that $K_{j \times s}\rightarrow (S_n,S_m)$. In this paper, we obtain the \textbf{exact} values of the size Ramsey numbers $m_j(S_n,S_m)$ for $n,m \ge 3$ and $j \ge 3$.
\end{abstract}

\section{\textbf{Introduction}}

In this paper we concentrate on simple graphs without loops and multiple edges. Let the complete multipartite graph having  $j$ uniform sets of size $s$ be denoted by $K_{j \times s}$ and the complete bipartite graph on $n+m$ vertices be denoted by $K_{n,m}$. Given, three graphs $K_N$, $G$ and $H$, we say that $K_N \rightarrow (G, H)$ if $K_N$ is colored by two colors red and blue and it contains a copy of G(in the first color) or a copy of H(in the second color). Using this notation we define the classical Ramsey number $r(n, m)$ as the smallest integer $N$ such that $K_N \rightarrow (K_n, K_m)$. Sadly to say, even in the case of diagonal classical Ramsey numbers $r(n,n)$ almost nothing significant is known beyond the case $n=5$ (see  Radziszowski et al 2014  for a survey). 

\vspace{8pt}

\noindent In the decades that followed there are several interesting variations that have originated from these classical Ramsey number. One obvious variation is the case of size Ramsey numbers  build up mainly by Erd{\"o}s, Faudree, Rousseau and Shelph (see Erd{\"o}s et al 1978  and  Faudree et al 1975). Another variation introduced recently by  Buger et al 2004 and Syafrizal et al  2005, is the concept of balanced multipartite Ramsey numbers. This concept is based on exploring the two colorings of multipartite graphs $K_{j \times s}$ instead of the complete graph. Formally define size Ramsey multipartite number $m_j(G, H)$ as the smallest natural number $s$ such that $K_{j \times s}\rightarrow (G,H)$. However, sad to say currently there are very few known balanced multipartite Ramsey numbers between pairs of  graphs and pairs of classes of graphs other than the ones introduced initially by Syafrizal et al  2005 and 2009). 

\vspace{8pt}

\maketitle

\section*{\textbf{Notation}}

Given a graph $G=(V,E)$ with the \textit{order} of the graph  is denoted by $|V(G)|$ and  the \textit{size} of the graph is denoted by $|E(G)|$. For a vertex $v$ of a graph $G$, the \textit{neighborhood} of $v$ is denoted by $N(v)$ and is defined as the set of vertices adjacent to $v$. Further the cardinality of this set, denoted $d(v)$,  is defined as the \textit{degree} of $v$. We say that a graph $G$ is a  $k$-regular graph if $d(v)= k$ for all $v\in V(G)$. Given a red-blue coloring of $K_{j \times s} =H_R \oplus H_B$.  The red degree and blue degree of any vertex $v$ belonging to  $V(K_{j \times s})=\{$$v_{k,i} $$|$ $ 0 \le i \le s-1,$ $ 0 \le k \le j-1\}$ denoted by  $d_R(v)$ and $d_B(v)$ respectively, are defined as the degree of vertex $v$ in $H_R$ and $H_B$ respectively.

\vspace{6pt}

\noindent  Given $w \ge2$, $0\le i \le w-1$ and $0 <c \le w-1$, define
$\sigma_{c,w}(i) = \{a_1\} \cup \{a_2\}$, $\sigma^{+}_{c,w}(i) = \{a_1\}$,  $\sigma^{-}_{c,w}(i) = \{a_2\}$ and $B_{0,w}(i)=\phi$ and if $k>0$, $B_{k,w}(i)=\cup_{c=1}^{k} \sigma_{c,w}(i)$ where 
\[
a_1 = 
\begin{cases} 
i+c & \text{ if }  i+c \le w-1   \\
\hspace{20pt} & \\
i+c-w &  \text{ if }  i+c>w-1 \\
\end{cases}
\]
\vspace{8pt}
\[
a_2 = 
\begin{cases} 
i-c & \text{ if }  i-c \ge 0   \\
\hspace{20pt} & \\
i-c+w &  \text{ if }  i-c<0 \\
\end{cases}
\]

\vspace{2pt}

\section{\textbf{Some Lemmas } }
\noindent In all the following lemmas assume $d > 0$ as the results are trivially true when $d=0$.
\begin{lemma}
\label{lemma 1}
There exists a regular induced subgraph of degree $d$ of $K_{j \times s}$ on the vertex set $V(K_{j \times s})$ provided  that $d$ is even, $j$ is odd and $s$ is odd.
\end{lemma}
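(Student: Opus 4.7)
The idea is to realise the required $d$-regular spanning subgraph of $K_{j\times s}$ as an explicit circulant graph on $\mathbb{Z}_{js}$. First, I would identify $V(K_{j\times s})$ with $\mathbb{Z}_{js}$ via the bijection $v_{k,i}\mapsto k+ij\pmod{js}$. Under this relabelling the labels in part $k$ are precisely the residues congruent to $k$ modulo $j$, and consequently $\{a,b\}$ is an edge of $K_{j\times s}$ if and only if $j\nmid(a-b)$ in $\mathbb{Z}_{js}$.

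Next, since $s$ is odd, the half-range $\{1,2,\ldots,(js-1)/2\}$ contains exactly $(s-1)/2$ multiples of $j$ (namely $j,2j,\ldots,j(s-1)/2$), so it contains $s(j-1)/2$ non-multiples of $j$. Since the maximum degree of $K_{j\times s}$ equals $s(j-1)$, the hypothesis $d\le s(j-1)$ is implicit. I would then choose distinct $c_1,\ldots,c_{d/2}$ from among these non-multiples and let $G$ be the graph on $\mathbb{Z}_{js}$ in which $a\sim b$ iff $(a-b)\bmod js\in S:=\{\pm c_1,\ldots,\pm c_{d/2}\}$. Because $js$ is odd, $c_t\not\equiv-c_t\pmod{js}$ for every $t$, so $|S|=d$ and hence $G$ is $d$-regular.

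It remains to verify $G\subseteq K_{j\times s}$: every edge of $G$ corresponds to a difference $\pm c_t$ with $j\nmid c_t$ by construction, so it joins vertices lying in distinct parts. This gives the desired $d$-regular spanning subgraph.

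There is no substantive obstacle in the argument; the only bookkeeping points are (i) the count of non-multiples of $j$ in the half-range, which uses the oddness of $s$, and (ii) the fact that $js$ is odd, which uses both parity hypotheses and is exactly what allows the antipodal pairing $c_t\leftrightarrow -c_t$ to genuinely contribute two distinct neighbours per chosen $c_t$. The even parity of $d$ is of course needed just for $|S|=d$ to be achievable. If one wishes, the neighbourhood of a vertex $i\in\mathbb{Z}_{js}$ in $G$ can be written in terms of the notation of the previous section as $\bigcup_{t=1}^{d/2}\sigma_{c_t,js}(i)$, and in the particular case where $c_t$ can be taken to be $1,2,\ldots,d/2$ (e.g.\ whenever $d/2<j$) the neighbourhood is exactly $B_{d/2,js}(i)$.
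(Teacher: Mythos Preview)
Your argument is correct. The relabelling $v_{k,i}\mapsto k+ij$ does carry the partite structure to residues modulo $j$, the count of admissible connection elements in the half-range is right (and really does use that $s$ is odd), and the oddness of $js$ is precisely what guarantees $c_t\ne -c_t$ so that $|S|=d$. The implicit bound $d\le s(j-1)$ is of course necessary for any $d$-regular spanning subgraph to exist.

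Your route is genuinely different from the paper's. The paper keeps the two-index labelling $v_{i,l}$ and proceeds by a case split on whether $d$ can be written as $2k_1(j-1)+2k_2$ or $(2k_1+1)(j-1)+2k_2$, in each case prescribing adjacencies via the $B_{k,w}$ and $\sigma_{c,w}$ operators acting separately on the part-index and the within-part index; the second case itself splits further according to $(j-1+2k_2)\bmod 4$. Your single circulant on $\mathbb{Z}_{js}$ collapses all of this to one line: pick any $d/2$ half-range elements avoiding the multiples of $j$. What you gain is brevity and a transparent explanation of why the three parity hypotheses are exactly what is needed. What the paper's construction buys is a template that it then reuses, with modifications, in the companion Lemma covering the cases where $j$ or $s$ is even; there the antipodal element $js/2$ becomes available and the paper's part-by-part bookkeeping tracks how to absorb the resulting odd contributions. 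Your approach could be adapted to those cases too (by allowing $js/2$ in the connection set when $d$ is odd), but as written it is tailored to the all-odd situation of this lemma.
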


\begin{proof}

\noindent If $d=2k_1(j-1)+2k_2$ for some non negative integers $k_1$ and $k_2$ such that $2k_1 \le s-1$ and $0<2k_2 \le j-1$. Construct  a graph by connecting the vertices $v_{i,l}$ and $v_{p,r}$ if one of the following situations hold

\vspace{3pt}

\noindent \textbf{a)} If $r \in B_{k_1,s}(l)$ and $p \neq i$.
\hspace{14pt} \textbf{b)} If $r=l$ and $p \in B_{k_2,j}(i)$.

\vspace{8pt}

\begin{tikzpicture}[line cap=round,line join=round,>=triangle 45,x=1.0cm,y=1.0cm]
\clip(-4.139000256081949,0.22752219679634095) rectangle (5.842508066581311,5.571668649885585);
\draw (0.5962545454545466,0.7514581235697962) node[anchor=north west] {$v_{2,2}$};
\draw [rotate around={31.319407985551642:(-1.8820892445582504,4.232721281464539)}] (-1.8820892445582504,4.232721281464539) ellipse (1.8193170789947408cm and 0.5790787938258259cm);
\draw [rotate around={-39.71631220657896:(3.7474133162612024,3.9940393592677355)}] (3.7474133162612024,3.9940393592677355) ellipse (1.7494760444172837cm and 0.7202335067485682cm);
\draw (-1.4477403329065306,0.832959267734556) node[anchor=north west] {$v_{2,4}$};
\draw (-0.42574289372599194,0.7863871853546933) node[anchor=north west] {$v_{2,3}$};
\draw (4.6842443021767,4.547082837528606) node[anchor=north west] {$v_{1,2}$};
\draw (-3.423602048655572,4.803229290617851) node[anchor=north west] {$v_{0,2}$};
\draw (1.5160522407170314,0.7514581235697962) node[anchor=north west] {$v_{2,1}$};
\draw (5.19524302176697,3.6738562929061804) node[anchor=north west] {$v_{1,4}$};
\draw (5.007876824583871,4.1046480549199105) node[anchor=north west] {$v_{1,3}$};
\draw (4.292478617157495,4.931302517162473) node[anchor=north west] {$v_{1,1}$};
\draw (-3.849434314980796,4.372437528604121) node[anchor=north west] {$v_{0,1}$};
\draw (2.2314504481434083,0.7863871853546933) node[anchor=north west] {$v_{2,0}$};
\draw (-2.4016046094750334,5.431952402745997) node[anchor=north west] {$v_{0,4}$};
\draw (-2.8785367477592847,5.105947826086958) node[anchor=north west] {$v_{0,3}$};
\draw (3.7985131882202343,5.2922361556064095) node[anchor=north west] {$v_{1,0}$};
\draw (-4.087900384122922,3.941645766590391) node[anchor=north west] {$v_{ 0,0}$};
\draw [rotate around={-0.24175315415687906:(0.6984542893726006,1.3801812356979415)}] (0.6984542893726006,1.3801812356979415) ellipse (2.828566326316242cm and 0.621612178572758cm);
\draw (-0.9367416133162614,1.3336091533180803)-- (-3.236235851472471,3.673856292906182);
\draw (-0.9367416133162614,1.3336091533180803)-- (-1.9417057618437896,4.605297940503436);
\draw (-0.9367416133162614,1.3336091533180803)-- (3.1853147247119082,4.954588558352407);
\draw (-0.9367416133162614,1.3336091533180803)-- (4.496878104993598,3.860144622425633);
\draw (-0.0850770806658131,1.3219661327231145)-- (-2.3845713188220223,4.3375084668192265);
\draw (-0.0850770806658131,1.3219661327231145)-- (-1.4307070422535209,4.942945537757441);
\draw (-0.0850770806658131,1.3219661327231145)-- (4.054012548015366,4.232721281464535);
\draw (-0.0850770806658131,1.3219661327231145)-- (4.956776952624839,3.406066819221971);
\draw (0.7495541613316263,1.2986800915331826)-- (-2.8104035851472466,3.999860869565221);
\draw (0.7495541613316263,1.2986800915331826)-- (-1.9417057618437896,4.605297940503436);
\draw (0.7495541613316263,1.2986800915331826)-- (3.5941137003841233,4.593654919908472);
\draw (0.7495541613316263,1.2986800915331826)-- (4.496878104993598,3.860144622425633);
\draw (1.6523185659411002,1.263751029748286)-- (-3.236235851472471,3.673856292906182);
\draw (1.6523185659411002,1.263751029748286)-- (-2.3845713188220223,4.3375084668192265);
\draw (1.6523185659411002,1.263751029748286)-- (3.1853147247119082,4.954588558352407);
\draw (1.6523185659411002,1.263751029748286)-- (4.054012548015366,4.232721281464535);
\draw (2.333650192061459,1.2753940503432515)-- (-2.8104035851472466,3.999860869565221);
\draw (2.333650192061459,1.2753940503432515)-- (-1.4307070422535209,4.942945537757441);
\draw (2.333650192061459,1.2753940503432515)-- (3.5941137003841233,4.593654919908472);
\draw (2.333650192061459,1.2753940503432515)-- (4.956776952624839,3.406066819221971);
\draw (-1.4307070422535209,4.942945537757441)-- (4.496878104993598,3.860144622425633);
\draw (-1.4307070422535209,4.942945537757441)-- (3.1853147247119082,4.954588558352407);
\draw (-1.9417057618437896,4.605297940503436)-- (4.054012548015366,4.232721281464535);
\draw (-1.9417057618437896,4.605297940503436)-- (4.956776952624839,3.406066819221971);
\draw (-2.3845713188220223,4.3375084668192265)-- (3.5941137003841233,4.593654919908472);
\draw (-2.3845713188220223,4.3375084668192265)-- (4.496878104993598,3.860144622425633);
\draw (-2.8104035851472466,3.999860869565221)-- (3.1853147247119082,4.954588558352407);
\draw (-2.8104035851472466,3.999860869565221)-- (4.054012548015366,4.232721281464535);
\draw (-3.236235851472471,3.673856292906182)-- (3.5941137003841233,4.593654919908472);
\draw (-3.236235851472471,3.673856292906182)-- (4.956776952624839,3.406066819221971);
\draw (-3.236235851472471,3.673856292906182)-- (3.1853147247119082,4.954588558352407);
\draw (-3.236235851472471,3.673856292906182)-- (2.333650192061459,1.2753940503432515);
\draw (-2.8104035851472466,3.999860869565221)-- (3.5941137003841233,4.593654919908472);
\draw (-2.8104035851472466,3.999860869565221)-- (1.6523185659411002,1.263751029748286);
\draw (-2.3845713188220223,4.3375084668192265)-- (4.054012548015366,4.232721281464535);
\draw (-2.3845713188220223,4.3375084668192265)-- (0.7495541613316263,1.2986800915331826);
\draw (-1.9417057618437896,4.605297940503436)-- (4.496878104993598,3.860144622425633);
\draw (-1.9417057618437896,4.605297940503436)-- (-0.0850770806658131,1.3219661327231145);
\draw (-1.4307070422535209,4.942945537757441)-- (4.956776952624839,3.406066819221971);
\draw (-1.4307070422535209,4.942945537757441)-- (-0.9367416133162614,1.3336091533180803);
\draw (3.1853147247119082,4.954588558352407)-- (2.333650192061459,1.2753940503432515);
\draw (3.5941137003841233,4.593654919908472)-- (1.6523185659411002,1.263751029748286);
\draw (4.054012548015366,4.232721281464535)-- (0.7495541613316263,1.2986800915331826);
\draw (4.496878104993598,3.860144622425633)-- (-0.0850770806658131,1.3219661327231145);
\draw (4.956776952624839,3.406066819221971)-- (-0.9367416133162614,1.3336091533180803);
\begin{scriptsize}
\draw [fill=black] (-1.4307070422535209,4.942945537757441) circle (1.5pt);
\draw [fill=black] (-1.9417057618437896,4.605297940503436) circle (1.5pt);
\draw [fill=black] (-2.8104035851472466,3.999860869565221) circle (1.5pt);
\draw [fill=black] (-3.236235851472471,3.673856292906182) circle (1.5pt);
\draw [fill=black] (-0.9367416133162614,1.3336091533180803) circle (1.5pt);
\draw [fill=black] (-0.0850770806658131,1.3219661327231145) circle (1.5pt);
\draw [fill=black] (4.956776952624839,3.406066819221971) circle (1.5pt);
\draw [fill=black] (3.1853147247119082,4.954588558352407) circle (1.5pt);
\draw [fill=black] (3.5941137003841233,4.593654919908472) circle (1.5pt);
\draw [fill=black] (4.054012548015366,4.232721281464535) circle (1.5pt);
\draw [fill=black] (1.6523185659411002,1.263751029748286) circle (1.5pt);
\draw [fill=black] (0.7495541613316263,1.2986800915331826) circle (1.5pt);
\draw [fill=black] (-2.3845713188220223,4.3375084668192265) circle (1.5pt);
\draw [fill=black] (2.333650192061459,1.2753940503432515) circle (1.5pt);
\draw [fill=black] (4.496878104993598,3.860144622425633) circle (1.5pt);
\end{scriptsize}
\end{tikzpicture}
\begin{center}
\noindent  Figure 1: In the case when $d=6=2k_1(j-1)+2k_2=(2\times 1)(3-1)+2\times 1$
\end{center}

\noindent  We know that $K_{j \times s}$ consists of $j$ partite sets of size $s$. Given $v_{i,l}$ $0 \leq i \leq j-1$, $0 \leq l \leq s-1$, the set $\{v_{p,r} |$ $ p \neq i$ and $ r\in B_{k_1,s}(l) \}$ will represent the vertices not belonging to the $i^{th}$ partite set (denoted by $V_i$)  that are at most $2k_1$ distance apart inside a partite set (with respect to the second coordinate), as illustrated in the following figure. 

\begin{flushleft}

\begin{tikzpicture}[line cap=round,line join=round,>=triangle 45,x=1.0cm,y=1.0cm]
\clip(-3.1732126760563375,1.2579295194508024) rectangle (7.993812676056322,6.214363386727695);
\draw (1.6233619718309793,4.165191762013734) node[anchor=north west] {$v_{i,l}$};
\draw (1.7545183098591481,5.1001263157894785) node[anchor=north west] {$v_{i,i-k}$};
\draw (-0.00672394366197604,1.8598736842105283) node[anchor=north west] {$V_{i-1}$};
\draw (1.8294647887323874,2.397781235697943) node[anchor=north west] {$v_{i,s-1}$};
\draw (2.073040845070415,5.958216933638449) node[anchor=north west] {$v_{i,0}$};
\draw [rotate around={88.26029569778845:(0.34927183098591585,3.902641647597257)}] (0.34927183098591585,3.902641647597257) ellipse (2.081105001654943cm and 0.9502120759662728cm);
\draw (1.5296788732394302,3.358330434782612) node[anchor=north west] {$v_{i,l+k}$};
\draw [rotate around={88.26029569778893:(2.3540901408450696,3.9026416475972545)}] (2.3540901408450696,3.9026416475972545) ellipse (2.0811050016549366cm and 0.9502120759662758cm);
\draw [rotate around={88.26029569778846:(4.396381690140845,3.877027002288341)}] (4.396381690140845,3.877027002288341) ellipse (2.0811050016549384cm and 0.9502120759662669cm);
\draw [rotate around={88.26029569778846:(6.888352112676057,3.8642196796338717)}] (6.888352112676057,3.8642196796338717) ellipse (2.081105001654923cm and 0.9502120759662585cm);
\draw (2.1292507042253446,1.847066361556066) node[anchor=north west] {$V_{i}$};
\draw (4.1340690140844965,1.8086443935926793) node[anchor=north west] {$V_{i+1}$};
\draw (6.813405633802803,1.795837070938217) node[anchor=north west] {$V_{j-1}$};
\draw [rotate around={88.26029569778893:(-2.158971830985916,3.928256292906184)}] (-2.158971830985916,3.928256292906184) ellipse (2.0811050016549473cm and 0.9502120759662726cm);
\draw (-2.3862746478873245,1.8982956521739152) node[anchor=north west] {$V_0$};
\draw (-2.9109,4.152384439359272) node[anchor=north west] {$v_{0,l}$};
\draw (-2.686060563380282,2.4618178489702545) node[anchor=north west] {$v_{0,s-1}$};
\draw (-2.423747887323944,5.958216933638449) node[anchor=north west] {$v_{0,0}$};
\draw (-0.6437690140845103,4.113962471395885) node[anchor=north west] {$v_{i-1,l}$};
\draw (-0.26903661971831366,2.449010526315792) node[anchor=north west] {$v_{i-1,s-1}$};
\draw (-0.0441971830985957,5.855758352402752) node[anchor=north west] {$v_{i-1,0}$};
\draw (3.422077464788723,4.139577116704809) node[anchor=north west] {$v_{i+1,l}$};
\draw (3.7780732394366097,2.43620320366133) node[anchor=north west] {$v_{i+1,s-1}$};
\draw (4.002912676056328,5.8941803203661385) node[anchor=north west] {$v_{i+1,0}$};
\draw (6.195097183098578,4.126769794050348) node[anchor=north west] {$v_{j-1,l}$};
\draw (6.270043661971817,2.449010526315792) node[anchor=north west] {$v_{j-1,s-1}$};
\draw (6.494883098591536,5.906987643020601) node[anchor=north west] {$v_{j-1,0}$};
\draw (-2.6860605633802823,5.061704347826092)-- (1.08,4.99766773455378);
\draw (1.08,4.99766773455378)-- (1.1362098591549294,4.13957711670481);
\draw (1.1362098591549294,4.13957711670481)-- (-2.6860605633802823,4.177999084668197);
\draw (-2.6860605633802823,4.177999084668197)-- (-2.6860605633802823,5.061704347826092);
\draw (-2.723533802816902,3.5888622425629326)-- (0.930107042253515,3.5888622425629326);
\draw (0.930107042253515,3.5888622425629326)-- (0.930107042253515,2.794808237986273);
\draw (0.930107042253515,2.794808237986273)-- (-2.761007042253522,2.782000915331811);
\draw (-2.761007042253522,2.782000915331811)-- (-2.723533802816902,3.5888622425629326);
\draw (4.209015492957736,5.010475057208243)-- (7.356767605633788,4.959245766590395);
\draw (7.356767605633788,4.959245766590395)-- (7.394240845070408,4.177999084668196);
\draw (7.394240845070408,4.177999084668196)-- (4.265225352112665,4.190806407322658);
\draw (4.265225352112665,4.190806407322658)-- (4.209015492957736,5.010475057208243);
\draw (4.340171830985904,3.601669565217395)-- (7.4317140845070275,3.601669565217395);
\draw (7.4317140845070275,3.601669565217395)-- (7.4317140845070275,2.7179643020594995);
\draw (7.4317140845070275,2.7179643020594995)-- (4.377645070422524,2.692349656750575);
\draw (4.377645070422524,2.692349656750575)-- (4.340171830985904,3.601669565217395);
\draw (-1.8616492957746493,4.9848604118993185) node[anchor=north west] {$ k_1 i \:\: vertices$};
\draw (4.377645070422524,3.4607890160183103) node[anchor=north west] {$(j-1-i)k_1 \:\: vertices$};
\draw (-1.8616492957746493,3.473596338672772) node[anchor=north west] {$ k_1 i\:\: vertices$};
\draw (4.358908450704215,4.843979862700234) node[anchor=north west] {$(j-1-i) k_1 \:\: vertices$};
\begin{scriptsize}
\draw [fill=black] (2.3728267605633726,4.626255377574375) circle (1.5pt);
\draw [fill=black] (2.4290366197183,2.4233958810068676) circle (1.5pt);
\draw [fill=black] (2.4283943661971827,3.9228814645309105) circle (1.5pt);
\draw [fill=black] (2.4290366197183,5.471538672768884) circle (1.5pt);
\draw [fill=black] (2.4103,2.9228814645308954) circle (1.5pt);
\draw [fill=black] (-0.756188732394369,3.8065867276887912) circle (1.5pt);
\draw [fill=black] (-1.0747112676056363,3.8065867276887912) circle (1.5pt);
\draw [fill=black] (-0.9248183098591576,3.8065867276887912) circle (1.5pt);
\draw [fill=black] (2.372826760563372,3.4607890160183103) circle (1.5pt);
\draw [fill=black] (-2.0864887323943675,4.8695945080091585) circle (1.5pt);
\draw [fill=black] (-2.049015492957749,2.449010526315792) circle (1.5pt);
\draw [fill=black] (-2.105225352112677,3.9346599542334135) circle (1.5pt);
\draw [fill=black] (-2.049015492957749,5.509960640732271) circle (1.5pt);
\draw [fill=black] (-2.086488732394368,2.961303432494282) circle (1.5pt);
\draw [fill=black] (-2.0864887323943675,3.4095597254004613) circle (1.5pt);
\draw [fill=black] (0.40548169014084023,4.280457665903895) circle (1.5pt);
\draw [fill=black] (0.40548169014083973,2.3721665903890186) circle (1.5pt);
\draw [fill=black] (0.4429549295774599,3.870623340961102) circle (1.5pt);
\draw [fill=black] (0.31179859154929057,5.39469473684211) circle (1.5pt);
\draw [fill=black] (0.38674507042252987,2.897266819221971) circle (1.5pt);
\draw [fill=black] (4.546274647887313,4.8695945080091585) circle (1.5pt);
\draw [fill=black] (4.565011267605621,2.4105885583524054) circle (1.5pt);
\draw [fill=black] (4.558694366197175,3.883430663615565) circle (1.5pt);
\draw [fill=black] (4.508801408450693,5.407502059496573) circle (1.5pt);
\draw [fill=black] (4.565011267605623,2.8716521739130463) circle (1.5pt);
\draw [fill=black] (4.546274647887313,3.447981693363848) circle (1.5pt);
\draw [fill=black] (7.07571830985914,4.843979862700234) circle (1.5pt);
\draw [fill=black] (7.09445492957745,2.43620320366133) circle (1.5pt);
\draw [fill=black] (7.048309859154933,3.8834306636155627) circle (1.5pt);
\draw [fill=black] (7.038245070422518,5.407502059496573) circle (1.5pt);
\draw [fill=black] (7.05698169014083,2.8460375286041217) circle (1.5pt);
\draw [fill=black] (7.05698169014083,3.4223670480549235) circle (1.5pt);
\draw [fill=black] (5.764154929577449,3.793779405034329) circle (1.5pt);
\draw [fill=black] (5.4456323943661875,3.793779405034329) circle (1.5pt);
\draw [fill=black] (5.595525352112661,3.793779405034329) circle (1.5pt);
\draw [fill=black] (0.3867450704225304,3.4095597254004613) circle (1.5pt);
\draw [fill=black] (-2.0864887323943675,4.267650343249432) circle (1.5pt);
\draw [fill=black] (0.3867450704225304,4.843979862700234) circle (1.5pt);
\draw [fill=black] (4.5275380281690145,4.267650343249432) circle (1.5pt);
\draw [fill=black] (7.03824507042252,4.318879633867281) circle (1.5pt);
\draw [fill=black] (2.3728267605633726,4.267650343249432) circle (1.5pt);
\end{scriptsize}
\end{tikzpicture}
\begin{center}
\noindent  Figure 2: The set consisting of $2k_1(j-1)$ vertices corresponding to part(a), namely $\{v_{p,r} | $ $ p \neq i$ and $ r\in B_{k_1,s}(l) \}$ 
\end{center}
\end{flushleft}
\vspace{4pt}

More precisely, it will consist of the vertices

\vspace{4pt}

$v_{0,l-k_1},...,v_{i-1,l-k_1},v_{i+1,l-k_1},...,v_{j-1,l-k_1}$ 

$v_{0,l-k_1+1},...,v_{i-1,l-k_1+1},v_{i+1,l-k_1+1},...,v_{j-1,l-k_1+1}$
 
...

$v_{0,l-1},...,v_{i-1,l-1},v_{i+1,l-1},...,v_{j-1,l-1}$ 

$v_{0,l+1},...,v_{i-1,l+1},v_{i+1,l+1},...,v_{j-1,l+1}$ 

...

$v_{0,l+k_1-1},...,v_{i-1,l+k_1-1},v_{i+1,l+k_1-1},...,v_{j-1,l+k_1-1}$ 

$v_{0,l+k_1},...,v_{i-1,l+k_1},v_{i+1,l+k_1},...,v_{j-1,l+k_1}$ 

\vspace{8pt}

\noindent That is such a set consists of $2k_1(j-1)$ vertices.

\vspace{12pt}

\noindent Similarly, given $v_{i,l}$ where $0 \leq i \leq j-1$ and $0 \leq l \leq s-1$ the set  $\{v_{p,r} |$ $ r \neq l$ and $ p\in B_{k_2,j}(i) \}$ will represent the vertices not belonging to the $i^{th}$ partite set(denoted by $V_i$) that are at most $2k_2$ distance apart between partite sets (with respect to the first  coordinate), as illustrated in the following figure. More precisely, it will consist of the vertices 

\vspace{4pt}

$v_{i-k_2,l},...,v_{i-1,l},v_{i+1,l},...,v_{i+k_2,l}$

\vspace{8pt}

\noindent That is such a set consists of $2k_2$ vertices. 

\begin{flushleft}

\begin{tikzpicture}[line cap=round,line join=round,>=triangle 45,x=1.0cm,y=1.0cm]
\clip(-3.2106859154929577,1.296351487414189) rectangle (7.956339436619702,6.252785354691082);
\draw (1.6233619718309789,4.165191762013734) node[anchor=north west] {$v_{i,l}$};
\draw (1.698308450704218,5.420309382151035) node[anchor=north west] {$v_{i,i-k_1}$};
\draw (-0.006723943661976573,1.8598736842105283) node[anchor=north west] {$V_{i-1}$};
\draw (1.829464788732387,2.397781235697943) node[anchor=north west] {$v_{i,s-1}$};
\draw (2.0730408450704148,5.958216933638449) node[anchor=north west] {$v_{i,0}$};
\draw [rotate around={88.26029569778845:(0.34927183098591585,3.902641647597257)}] (0.34927183098591585,3.902641647597257) ellipse (2.081105001654943cm and 0.9502120759662728cm);
\draw (1.5296788732394295,3.358330434782612) node[anchor=north west] {$v_{i,l+k_1}$};
\draw [rotate around={88.26029569778893:(2.3540901408450696,3.9026416475972545)}] (2.3540901408450696,3.9026416475972545) ellipse (2.0811050016549366cm and 0.9502120759662758cm);
\draw [rotate around={88.26029569778846:(4.396381690140845,3.877027002288341)}] (4.396381690140845,3.877027002288341) ellipse (2.0811050016549384cm and 0.9502120759662669cm);
\draw [rotate around={88.26029569778846:(6.888352112676057,3.8642196796338717)}] (6.888352112676057,3.8642196796338717) ellipse (2.081105001654923cm and 0.9502120759662585cm);
\draw (2.129250704225344,1.847066361556066) node[anchor=north west] {$V_{i}$};
\draw (4.1340690140844965,1.8086443935926793) node[anchor=north west] {$V_{i+1}$};
\draw (6.7946690140844925,1.795837070938217) node[anchor=north west] {$V_{j-1}$};
\draw [rotate around={88.26029569778893:(-2.140235211267607,3.9410636155606453)}] (-2.140235211267607,3.9410636155606453) ellipse (2.081105001654951cm and 0.9502120759662743cm);
\draw (-2.3862746478873254,1.8982956521739152) node[anchor=north west] {$V_0$};
\draw (-2.9109000000000003,4.152384439359272) node[anchor=north west] {$v_{0,l}$};
\draw (-2.854690140845071,5.39469473684211) node[anchor=north west] {$v_{0,i-k_1}$};
\draw (-2.6860605633802823,2.4618178489702545) node[anchor=north west] {$v_{0,s-1}$};
\draw (-2.4237478873239446,5.958216933638449) node[anchor=north west] {$v_{0,0}$};
\draw (-2.9858464788732397,3.3711377574370744) node[anchor=north west] {$v_{0,l+k_1}$};
\draw (-0.3439830985915535,5.39469473684211) node[anchor=north west] {$v_{i-1,i-k_1}$};
\draw (-0.2690366197183142,2.449010526315792) node[anchor=north west] {$v_{i-1,s-1}$};
\draw (0.012012676056333257,5.971024256292912) node[anchor=north west] {$v_{i-1,0}$};
\draw (-0.512612676056342,3.3199084668192254) node[anchor=north west] {$v_{i-1,l+k_1}$};
\draw (3.6656535211267505,5.330658123569799) node[anchor=north west] {$v_{i+1,i-k_1}$};
\draw (3.778073239436609,2.43620320366133) node[anchor=north west] {$v_{i+1,s-1}$};
\draw (3.8904929577464684,5.817336384439365) node[anchor=north west] {$v_{i+1,0}$};
\draw (3.5532338028168913,3.3711377574370744) node[anchor=north west] {$v_{i+1,l+k_1}$};
\draw (6.213833802816888,4.139577116704809) node[anchor=north west] {$v_{j-1,l}$};
\draw (6.1763605633802685,5.369080091533186) node[anchor=north west] {$v_{j-1,i-k_1}$};
\draw (6.401199999999986,2.449010526315792) node[anchor=north west] {$v_{j-1,s-1}$};
\draw (6.607302816901394,5.906987643020601) node[anchor=north west] {$v_{j-1,0}$};
\draw (5.97025774647886,3.4095597254004613) node[anchor=north west] {$v_{j-1,l+k_1}$};
\draw (-1.2058676056338058,4.536604118993139) node[anchor=north west] {$k_2 \:\:vertices$};
\draw (3.590707042253511,4.523796796338678) node[anchor=north west] {$k_2 \:\:vertices$};
\draw (-0.9435549295774681,4.062733180778036)-- (0.5741112676056283,4.062733180778036);
\draw (0.5741112676056283,4.062733180778036)-- (0.5741112676056283,3.691320823798631);
\draw (0.5741112676056283,3.691320823798631)-- (-0.9435549295774681,3.6785135011441685);
\draw (-0.9435549295774681,3.6785135011441685)-- (-0.9435549295774681,4.062733180778036);
\draw (4.171542253521114,4.049925858123572)-- (5.689208450704226,4.049925858123572);
\draw (5.689208450704226,4.049925858123572)-- (5.689208450704226,3.678513501144169);
\draw (5.689208450704226,3.678513501144169)-- (4.171542253521114,3.6657061784897063);
\draw (4.171542253521114,3.6657061784897063)-- (4.171542253521114,4.049925858123572);
\begin{scriptsize}
\draw [fill=black] (2.428394366197174,4.587833409610987) circle (1.5pt);
\draw [fill=black] (2.4290366197183,2.4233958810068676) circle (1.5pt);
\draw [fill=black] (2.4103,3.8962379862700267) circle (1.5pt);
\draw [fill=black] (2.418757746478873,4.973081922196817) circle (1.5pt);
\draw [fill=black] (2.4290366197183,5.471538672768884) circle (1.5pt);
\draw [fill=black] (2.4103,2.9228814645308954) circle (1.5pt);
\draw [fill=black] (-0.756188732394369,3.8065867276887912) circle (1.5pt);
\draw [fill=black] (-0.9248183098591576,3.8065867276887912) circle (1.5pt);
\draw [fill=black] (2.4103,3.550440274599546) circle (1.5pt);
\draw [fill=black] (-2.079529577464788,4.562218764302063) circle (1.5pt);
\draw [fill=black] (-2.049015492957749,2.449010526315792) circle (1.5pt);
\draw [fill=black] (-2.060792957746477,3.9218526315789495) circle (1.5pt);
\draw [fill=black] (-2.070429577464787,4.972053089244854) circle (1.5pt);
\draw [fill=black] (-2.049015492957749,5.509960640732271) circle (1.5pt);
\draw [fill=black] (-2.060792957746477,2.9228814645308945) circle (1.5pt);
\draw [fill=black] (-2.067752112676059,3.5376329519450835) circle (1.5pt);
\draw [fill=black] (0.41137042253520617,4.536604118993138) circle (1.5pt);
\draw [fill=black] (0.4429549295774583,2.4233958810068676) circle (1.5pt);
\draw [fill=black] (0.38674507042252987,3.8834306636155644) circle (1.5pt);
\draw [fill=black] (0.3926338028168963,4.933631121281468) circle (1.5pt);
\draw [fill=black] (0.36800845070422006,5.509960640732271) circle (1.5pt);
\draw [fill=black] (0.43010704225351915,2.871652173913046) circle (1.5pt);
\draw [fill=black] (0.4054816901408386,3.4992109839816967) circle (1.5pt);
\draw [fill=black] (4.564369014084509,4.523796796338676) circle (1.5pt);
\draw [fill=black] (4.565011267605621,2.4105885583524054) circle (1.5pt);
\draw [fill=black] (4.558694366197175,3.883430663615565) circle (1.5pt);
\draw [fill=black] (4.545632394366189,4.895209153318081) circle (1.5pt);
\draw [fill=black] (4.527538028169002,5.343465446224261) circle (1.5pt);
\draw [fill=black] (4.58310563380282,2.8844594965675068) circle (1.5pt);
\draw [fill=black] (4.527538028169001,3.5376329519450835) circle (1.5pt);
\draw [fill=black] (7.029573239436622,4.56221876430206) circle (1.5pt);
\draw [fill=black] (7.038245070422518,2.4746251716247167) circle (1.5pt);
\draw [fill=black] (7.048309859154933,3.8834306636155627) circle (1.5pt);
\draw [fill=black] (7.029573239436622,4.933631121281466) circle (1.5pt);
\draw [fill=black] (7.038245070422518,5.407502059496573) circle (1.5pt);
\draw [fill=black] (7.048309859154933,2.897266819221968) circle (1.5pt);
\draw [fill=black] (7.038245070422518,3.5376329519450835) circle (1.5pt);
\draw [fill=black] (5.782891549295759,3.8193940503432535) circle (1.5pt);
\draw [fill=black] (5.464369014084498,3.8193940503432535) circle (1.5pt);
\draw [fill=black] (5.614261971830971,3.8193940503432535) circle (1.5pt);
\draw [fill=black] (-1.0934478873239468,3.806586727688791) circle (1.5pt);
\end{scriptsize}
\end{tikzpicture}
\begin{center}
\noindent  Figure 3: The set consisting of $2k_2$  vertices corresponding to part(b), namely $\{v_{p,r} |$ $ r=l$ and $p \in B_{k_2,j}(i) \}$
\end{center}
\end{flushleft}
\vspace{4pt}

\vspace{8pt}
\noindent Thus, by the above definition, part $(a)$ will represent $2k_1(j-1)$ vertices adjacent to $v_{i,l}$ belonging to $V_0,V_1,  V_2,...,V_{i-1},V_{i+1},...V_{j-1}$ and part $(b)$ will represent another  $2k_2$ vertices adjacent to $v_{i,l}$ belonging to $V_{i-k_2},$ $V_{i-k_2+1},...,$ $V_{i-1},V_{i+1},...,V_{i+k_2-1},$ $ V_{i+k_2}$.

\vspace{8pt}
\noindent Therefore, the degree of $v_{i,l}$ will be equal to  $2k_1(j-1)+2k_2$. Also by the remark $l \in B_{k,w}(i)$ if and only if $i \in B_{k,w}(l)$, before the beginning of this section, we get that the graph is well defined.

\vspace{8pt}

\noindent  Next, if $d=(2k_1 + 1)(j-1)+2k_2$ for some non negative integers $k_1$ and $k_2$ such that $2k_1 \le s-3$ and $0<2k_2 \le j-1$. Construct  a graph by connecting the vertices $v_{i,l}$ and $v_{p,r}$ if one of the following situations hold

\vspace{3pt}
\noindent \textbf{a)}  If $r \in B_{k_1,s}(l)$ and $p \neq i$.

\vspace{3pt}

\noindent \textbf{b)} If there exists $w$ such that $4w=(j-1+2k_2)$  and $r \in \sigma_{k_1+1,s}(l)$   and $p \in B_{w,j}(i)$. 

\vspace{3pt}

\noindent \textbf{c)} If there exists $w$ such that  $(j-1+2k_2) -4w =2$  and  $r \in \sigma_{k_1+1,s}(l)$   and $p \in B_{w,j}(i)$ or else $r=l$  and $p \in B_{1,j}(i)$. 

\vspace{12pt}

\begin{tikzpicture}[line cap=round,line join=round,>=triangle 45,x=0.8176752733586924cm,y=0.8056824326247191cm]
\clip(-4.820331882202307,0.28573729977116674) rectangle (6.149107298335471,5.4319524027459964);
\draw (-1.192240973111396,0.7281720823798625) node[anchor=north west] {$v_{2,2}$};
\draw [rotate around={31.31940798555165:(-2.3930879641485308,4.186149199084669)}] (-2.3930879641485308,4.186149199084669) ellipse (1.4876105898931513cm and 0.46655361129097644cm);
\draw [rotate around={-39.71631220657896:(3.8155464788732383,4.087183524027462)}] (3.8155464788732383,4.087183524027462) ellipse (1.4305033028533927cm and 0.5802794837750211cm);
\draw (5.450742381562104,3.743714416475973) node[anchor=north west] {$v_{1,2}$};
\draw (-2.997769782330347,5.466881464530894) node[anchor=north west] {$v_{0,2}$};
\draw (0.42592163892445634,0.7514581235697939) node[anchor=north west] {$v_{2,1}$};
\draw (4.718310883482718,4.5936549199084675) node[anchor=north west] {$v_{1,1}$};
\draw (-3.934600768245841,4.884730434782609) node[anchor=north west] {$v_{0,1}$};
\draw (2.2314504481434074,0.7398151029748282) node[anchor=north west] {$v_{2,0}$};
\draw (3.866646350832269,5.373737299771168) node[anchor=north west] {$v_{1,0}$};
\draw (-4.649998975672218,4.104648054919909) node[anchor=north west] {$v_{ 0,0}$};
\draw [rotate around={-0.24175315415687906:(0.6984542893726006,1.3801812356979415)}] (0.6984542893726006,1.3801812356979415) ellipse (2.3128487440838255cm and 0.5008220121816509cm);
\draw (-0.9367416133162614,1.3336091533180803)-- (-3.236235851472471,3.673856292906182);
\draw (2.333650192061459,1.2753940503432515)-- (-1.4307070422535209,4.942945537757441);
\draw (2.333650192061459,1.2753940503432515)-- (4.956776952624839,3.406066819221971);
\draw (-1.4307070422535209,4.942945537757441)-- (3.1853147247119082,4.954588558352407);
\draw (-3.236235851472471,3.673856292906182)-- (4.956776952624839,3.406066819221971);
\draw (-0.9367416133162614,1.3336091533180803)-- (-2.3845713188220223,4.3375084668192265);
\draw (-0.9367416133162614,1.3336091533180803)-- (3.1853147247119082,4.954588558352407);
\draw (-0.9367416133162614,1.3336091533180803)-- (4.054012548015366,4.232721281464535);
\draw (-2.3845713188220223,4.3375084668192265)-- (3.1853147247119082,4.954588558352407);
\draw (-2.3845713188220223,4.3375084668192265)-- (4.956776952624839,3.406066819221971);
\draw (-2.3845713188220223,4.3375084668192265)-- (2.333650192061459,1.2753940503432515);
\draw (0.7495541613316263,1.2986800915331826)-- (-3.236235851472471,3.673856292906182);
\draw (0.7495541613316263,1.2986800915331826)-- (-1.4307070422535209,4.942945537757441);
\draw (0.7495541613316263,1.2986800915331826)-- (3.1853147247119082,4.954588558352407);
\draw (0.7495541613316263,1.2986800915331826)-- (4.956776952624839,3.406066819221971);
\draw (4.054012548015366,4.232721281464535)-- (-1.4307070422535209,4.942945537757441);
\draw (4.054012548015366,4.232721281464535)-- (-3.236235851472471,3.673856292906182);
\draw (4.054012548015366,4.232721281464535)-- (2.333650192061459,1.2753940503432515);
\begin{scriptsize}
\draw [fill=black] (-1.4307070422535209,4.942945537757441) circle (1.5pt);
\draw [fill=black] (-3.236235851472471,3.673856292906182) circle (1.5pt);
\draw [fill=black] (-0.9367416133162614,1.3336091533180803) circle (1.5pt);
\draw [fill=black] (4.956776952624839,3.406066819221971) circle (1.5pt);
\draw [fill=black] (3.1853147247119082,4.954588558352407) circle (1.5pt);
\draw [fill=black] (4.054012548015366,4.232721281464535) circle (1.5pt);
\draw [fill=black] (0.7495541613316263,1.2986800915331826) circle (1.5pt);
\draw [fill=black] (-2.3845713188220223,4.3375084668192265) circle (1.5pt);
\draw [fill=black] (2.333650192061459,1.2753940503432515) circle (1.5pt);
\end{scriptsize}
\end{tikzpicture}
\begin{center}
\noindent  Figure 4: In the case when $d=4=(2k_1 + 1)(j-1)+2k_2=(2\times 0+1)\times (3-1)+2\times 1$
\end{center}
\vspace{8pt}

\noindent  It should be noted that  the vertex sets of part (b) and part (c) are disjoint and that $j-1+2k_2$ is even as $j$ is odd. Therefore, given $v_{i,l}$, it will be either adjacent the vertices corresponding to part (a) and part (b) or else adjacent the vertices corresponding to part (a) and part (c) according to whether  $4w=(j-1+2k_2)$  or else $(j-1+2k_2) -4w =2$, respectively.  

\vspace{8pt} \noindent As illustrated in figure 2, the set generated by part (a) namely, $\{v_{p,r} |$ $ p \neq i$ and $ r\in B_{k_1,s}(l) \}$  will consist of $2k_1(j-1)$ vertices. 

\vspace{8pt} \noindent Similarly, given $v_{i,l}$ the set generated by part (b), will represent the vertices  belonging to $V_{i-w},...,V_{i-1},V_{i+1},...V_{i+w}$  sets, that are at most $2(k_1+1)$ distance apart between partite sets (with respect to the first coordinate), as illustrated in the following figure. More precisely, it will consist of the vertices

\vspace{4pt}

$v_{i-w,l-(k_1+1)},...,v_{i-1,l-(k_1+1)},v_{i+1,l-(k_1+1)},...v_{i+w,l-(k_1+1)}$ 

$v_{i-w,l+(k_1+1)},...,v_{i-1,l+(k_1+1)},v_{i+1,l+(k_1+1)},...v_{i+w,l+(k_1+1)}$ 

\vspace{4pt}

\noindent Such a set consists of $4w$ vertices. That is, the set consists of $(j-1)+2k_2$ vertices.

\begin{flushleft}

\begin{tikzpicture}[line cap=round,line join=round,>=triangle 45,x=1.0cm,y=1.0cm]
\clip(-3.1732126760563375,1.2579295194508024) rectangle (7.993812676056322,6.214363386727695);
\draw (1.6233619718309793,4.165191762013734) node[anchor=north west] {$v_{i,l}$};
\draw (1.7545183098591481,5.1001263157894785) node[anchor=north west] {$v_{i,i-k-1}$};
\draw (-0.00672394366197604,1.8598736842105283) node[anchor=north west] {$V_{i-w}$};
\draw (1.8294647887323874,2.397781235697943) node[anchor=north west] {$v_{i,s-1}$};
\draw (2.073040845070415,5.958216933638449) node[anchor=north west] {$v_{i,0}$};
\draw [rotate around={88.26029569778845:(-0.15661690140844967,3.9538709382151045)}] (-0.15661690140844967,3.9538709382151045) ellipse (2.0811050016549424cm and 0.9502120759662724cm);
\draw (1.54841549295774,3.358330434782612) node[anchor=north west] {$v_{i,l+k+1}$};
\draw [rotate around={88.26029569778893:(2.3540901408450696,3.9026416475972545)}] (2.3540901408450696,3.9026416475972545) ellipse (2.0811050016549366cm and 0.9502120759662758cm);
\draw [rotate around={88.26029569778846:(4.846060563380282,3.8642196796338757)}] (4.846060563380282,3.8642196796338757) ellipse (2.08110500165495cm and 0.9502120759662721cm);
\draw [rotate around={88.26029569778846:(6.888352112676057,3.8642196796338717)}] (6.888352112676057,3.8642196796338717) ellipse (2.081105001654923cm and 0.9502120759662585cm);
\draw (2.1292507042253446,1.847066361556066) node[anchor=north west] {$V_{i}$};
\draw (4.396381690140834,1.8086443935926793) node[anchor=north west] {$V_{i+w}$};
\draw (6.813405633802803,1.795837070938217) node[anchor=north west] {$V_{j-1}$};
\draw [rotate around={88.26029569778893:(-2.158971830985916,3.928256292906184)}] (-2.158971830985916,3.928256292906184) ellipse (2.0811050016549473cm and 0.9502120759662726cm);
\draw (-2.3862746478873245,1.8982956521739152) node[anchor=north west] {$V_0$};
\draw (-2.9109,4.152384439359272) node[anchor=north west] {$v_{0,l}$};
\draw (-2.686060563380282,2.4618178489702545) node[anchor=north west] {$v_{0,s-1}$};
\draw (-2.423747887323944,5.958216933638449) node[anchor=north west] {$v_{0,0}$};
\draw (-1.018501408450707,4.113962471395885) node[anchor=north west] {$v_{i-w,l}$};
\draw (-0.8123985915492988,2.43620320366133) node[anchor=north west] {$v_{i-w,s-1}$};
\draw (-0.5875591549295808,5.842951029748289) node[anchor=north west] {$v_{i-w,0}$};
\draw (4.902270422535199,4.101155148741423) node[anchor=north west] {$v_{i+w,l}$};
\draw (4.321435211267595,2.43620320366133) node[anchor=north west] {$v_{i+w,s-1}$};
\draw (4.415118309859144,5.881372997711677) node[anchor=north west] {$v_{i+w,0}$};
\draw (6.195097183098578,4.126769794050348) node[anchor=north west] {$v_{j-1,l}$};
\draw (6.401199999999986,2.449010526315792) node[anchor=north west] {$v_{j-1,s-1}$};
\draw (6.607302816901394,5.906987643020601) node[anchor=north west] {$v_{j-1,0}$};
\draw (-0.6250323943661988,5.100126315789479)-- (1.3610492957746476,5.074511670480555);
\draw (1.3610492957746476,5.074511670480555)-- (1.3610492957746474,4.485374828375291);
\draw (1.3610492957746474,4.485374828375291)-- (-0.6812422535211309,4.48537482837529);
\draw (-0.6812422535211309,4.48537482837529)-- (-0.6250323943661988,5.100126315789479);
\draw (-0.7561887323943697,3.0765693363844426)-- (1.1924197183098526,3.089376659038905);
\draw (1.1924197183098526,3.089376659038905)-- (1.2111563380281687,2.5770837528604145);
\draw (1.2111563380281687,2.5770837528604145)-- (-0.7749253521126798,2.55146910755149);
\draw (-0.7749253521126798,2.55146910755149)-- (-0.7561887323943697,3.0765693363844426);
\draw (3.3283943661971835,5.176970251716252)-- (5.501842253521127,5.125740961098404);
\draw (5.501842253521127,5.125740961098404)-- (5.520578873239423,4.60064073226545);
\draw (5.520578873239423,4.60064073226545)-- (3.3658676056338037,4.613448054919912);
\draw (3.3658676056338037,4.613448054919912)-- (3.3283943661971835,5.176970251716252);
\draw (3.459550704225353,3.0765693363844426)-- (5.38942253521127,3.0765693363844426);
\draw (5.38942253521127,3.0765693363844426)-- (5.3706859154929605,2.55146910755149);
\draw (5.3706859154929605,2.55146910755149)-- (3.4782873239436634,2.5258544622425654);
\draw (3.4782873239436634,2.5258544622425654)-- (3.459550704225353,3.0765693363844426);
\draw (-0.4001929577464825,5.151355606407328) node[anchor=north west] {$ w \:\: vertices$};
\draw (3.6469169014084413,2.9869180778032067) node[anchor=north west] {$w \:\: vertices$};
\draw (-0.4376661971831022,3.0253400457665935) node[anchor=north west] {$ w\:\: vertices$};
\draw (3.571970422535202,5.241006864988564) node[anchor=north west] {$w \:\: vertices$};
\begin{scriptsize}
\draw [fill=black] (2.3728267605633726,4.626255377574375) circle (1.5pt);
\draw [fill=black] (2.4290366197183,2.4233958810068676) circle (1.5pt);
\draw [fill=black] (2.4283943661971827,3.9228814645309105) circle (1.5pt);
\draw [fill=black] (2.4290366197183,5.471538672768884) circle (1.5pt);
\draw [fill=black] (2.4103,2.9228814645308954) circle (1.5pt);
\draw [fill=black] (1.2486295774647886,3.883430663615564) circle (1.5pt);
\draw [fill=black] (0.9301070422535155,3.883430663615564) circle (1.5pt);
\draw [fill=black] (1.08,3.883430663615564) circle (1.5pt);
\draw [fill=black] (2.372826760563372,3.4607890160183103) circle (1.5pt);
\draw [fill=black] (-2.1989084507042262,4.741521281464536) circle (1.5pt);
\draw [fill=black] (-2.049015492957749,2.449010526315792) circle (1.5pt);
\draw [fill=black] (-2.105225352112677,3.9346599542334135) circle (1.5pt);
\draw [fill=black] (-2.049015492957749,5.509960640732271) circle (1.5pt);
\draw [fill=black] (-2.086488732394368,2.961303432494282) circle (1.5pt);
\draw [fill=black] (-2.0864887323943675,3.4095597254004613) circle (1.5pt);
\draw [fill=black] (-0.13788028169014488,4.267650343249432) circle (1.5pt);
\draw [fill=black] (-0.13788028169014538,2.359359267734556) circle (1.5pt);
\draw [fill=black] (-0.1004070422535252,3.8578160183066394) circle (1.5pt);
\draw [fill=black] (-0.23156338028169016,5.381887414187648) circle (1.5pt);
\draw [fill=black] (-0.11914366197183503,2.999725400457669) circle (1.5pt);
\draw [fill=black] (4.696167605633792,4.792750572082385) circle (1.5pt);
\draw [fill=black] (4.827323943661959,2.4105885583524054) circle (1.5pt);
\draw [fill=black] (4.8210070422535125,3.883430663615565) circle (1.5pt);
\draw [fill=black] (4.771114084507031,5.407502059496574) circle (1.5pt);
\draw [fill=black] (4.827323943661961,2.9356887871853576) circle (1.5pt);
\draw [fill=black] (4.808587323943651,3.447981693363848) circle (1.5pt);
\draw [fill=black] (7.019508450704211,4.805557894736847) circle (1.5pt);
\draw [fill=black] (7.09445492957745,2.43620320366133) circle (1.5pt);
\draw [fill=black] (7.048309859154933,3.8834306636155627) circle (1.5pt);
\draw [fill=black] (7.038245070422518,5.407502059496573) circle (1.5pt);
\draw [fill=black] (7.07571830985914,2.961303432494282) circle (1.5pt);
\draw [fill=black] (7.05698169014083,3.4223670480549235) circle (1.5pt);
\draw [fill=black] (4.040385915492958,3.793779405034329) circle (1.5pt);
\draw [fill=black] (3.4595507042253524,3.793779405034329) circle (1.5pt);
\draw [fill=black] (3.6094436619718193,3.793779405034329) circle (1.5pt);
\draw [fill=black] (-0.1566169014084547,3.3967524027459985) circle (1.5pt);
\draw [fill=black] (-2.0864887323943675,4.267650343249432) circle (1.5pt);
\draw [fill=black] (-0.13788028169014485,4.664677345537762) circle (1.5pt);
\draw [fill=black] (4.789850704225352,4.267650343249432) circle (1.5pt);
\draw [fill=black] (7.03824507042252,4.318879633867281) circle (1.5pt);
\draw [fill=black] (2.3728267605633726,4.267650343249432) circle (1.5pt);
\end{scriptsize}
\end{tikzpicture}
\begin{center}
\noindent  Figure 5: The set consisting of $4w$ vertices corresponding to part (b), namely $\{v_{p,r} |$ $ r \in \sigma_{k_1+1,s}(l)$ and $p \in B_{w,j}(i) \}$  
\end{center}
\end{flushleft}
\vspace{4pt}

\noindent Similarly, given $v_{i,l}$ the set generated by the later part (c), will represent the two vertices  belonging to $V_{i-1},V_{i+1}$  sets, namely $v_{i-1,l-1}$, $v_{i+1,l+1}$.  More precisely the set generated by part (c), will consist of the vertices 
\vspace{4pt}

$v_{i-w,l-(k_1-1)},...,v_{i-1,l-(k_1-1)},v_{i+1,l-(k_1-1)},...v_{i+w,l-(k_1-1)}$ 

$v_{i-1,l-1},v_{i+1,l+1}$ 

$v_{i-w,l-(k_1+1)},...,v_{i-1,l-(k_1+1)},v_{i+1,l-(k_1+1)},...v_{i+w,l-(k_1+1)}$ 

\vspace{14pt}

\noindent Such a set consists of $4w+2$ vertices. That is, the set consists of $(j-1)+2k_2$ vertices.

\begin{flushleft}
\begin{tikzpicture}[line cap=round,line join=round,>=triangle 45,x=1.0cm,y=1.0cm]
\clip(-3.1732126760563375,1.2579295194508024) rectangle (7.993812676056322,6.214363386727695);
\draw (1.6233619718309793,4.165191762013734) node[anchor=north west] {$v_{i,l}$};
\draw (-0.00672394366197604,1.8598736842105283) node[anchor=north west] {$V_{i-1}$};
\draw (1.773254929577458,2.397781235697943) node[anchor=north west] {$v_{i,s-1}$};
\draw (2.073040845070415,5.958216933638449) node[anchor=north west] {$v_{i,0}$};
\draw [rotate around={88.26029569778845:(0.34927183098591585,3.902641647597257)}] (0.34927183098591585,3.902641647597257) ellipse (2.081105001654943cm and 0.9502120759662728cm);
\draw [rotate around={88.26029569778893:(2.3540901408450696,3.9026416475972545)}] (2.3540901408450696,3.9026416475972545) ellipse (2.0811050016549366cm and 0.9502120759662758cm);
\draw [rotate around={88.26029569778846:(4.396381690140845,3.877027002288341)}] (4.396381690140845,3.877027002288341) ellipse (2.0811050016549384cm and 0.9502120759662669cm);
\draw [rotate around={88.26029569778846:(6.888352112676057,3.8642196796338717)}] (6.888352112676057,3.8642196796338717) ellipse (2.081105001654923cm and 0.9502120759662585cm);
\draw (2.1292507042253446,1.847066361556066) node[anchor=north west] {$V_{i}$};
\draw (4.1340690140844965,1.8086443935926793) node[anchor=north west] {$V_{i+1}$};
\draw (6.813405633802803,1.795837070938217) node[anchor=north west] {$V_{j-1}$};
\draw [rotate around={88.26029569778893:(-2.158971830985916,3.928256292906184)}] (-2.158971830985916,3.928256292906184) ellipse (2.0811050016549473cm and 0.9502120759662726cm);
\draw (-2.3862746478873245,1.8982956521739152) node[anchor=north west] {$V_0$};
\draw (-2.9109,4.152384439359272) node[anchor=north west] {$v_{0,l}$};
\draw (-2.686060563380282,2.4618178489702545) node[anchor=north west] {$v_{0,s-1}$};
\draw (-2.423747887323944,5.958216933638449) node[anchor=north west] {$v_{0,0}$};
\draw (-0.7749253521126791,4.203613729977121) node[anchor=north west] {$v_{i-1,l}$};
\draw (-0.26903661971831366,2.449010526315792) node[anchor=north west] {$v_{i-1,s-1}$};
\draw (-0.0441971830985957,5.855758352402752) node[anchor=north west] {$v_{i-1,0}$};
\draw (3.347130985915484,4.165191762013734) node[anchor=north west] {$v_{i+1,l}$};
\draw (3.7780732394366097,2.43620320366133) node[anchor=north west] {$v_{i+1,s-1}$};
\draw (3.9279661971830886,5.868565675057214) node[anchor=north west] {$v_{i+1,0}$};
\draw (5.914047887323931,4.126769794050348) node[anchor=north west] {$v_{j-1,l}$};
\draw (6.213833802816888,2.4233958810068676) node[anchor=north west] {$v_{j-1,s-1}$};
\draw (6.457409859154915,5.8941803203661385) node[anchor=north west] {$v_{j-1,0}$};
\draw (0.14316901408450639,4.075540503432498)-- (0.5179014084506997,4.062733180778034);
\draw (0.5179014084506997,4.062733180778034)-- (0.5741112676056346,3.691320823798631);
\draw (0.5741112676056346,3.691320823798631)-- (0.16190563380281614,3.691320823798631);
\draw (0.16190563380281614,3.691320823798631)-- (0.14316901408450639,4.075540503432498);
\draw (-0.08167042253521536,4.498182151029753) node[anchor=north west] {$ 1 \: vertex$};
\draw (3.8155464788732294,4.408530892448517) node[anchor=north west] {$ 1 \: vertex$};
\draw (4.321435211267606,3.99869656750572)-- (4.6961676056338035,3.9858892448512617);
\draw (4.6961676056338035,3.9858892448512617)-- (4.7523774647887205,3.614476887871857);
\draw (4.7523774647887205,3.614476887871857)-- (4.340171830985916,3.614476887871857);
\draw (4.340171830985916,3.614476887871857)-- (4.321435211267606,3.99869656750572);
\begin{scriptsize}
\draw [fill=black] (2.3728267605633726,4.626255377574375) circle (1.5pt);
\draw [fill=black] (2.4290366197183,2.4233958810068676) circle (1.5pt);
\draw [fill=black] (2.3728267605633726,3.8962379862700267) circle (1.5pt);
\draw [fill=black] (2.4290366197183,5.471538672768884) circle (1.5pt);
\draw [fill=black] (2.3915633802816822,3.153413272311216) circle (1.5pt);
\draw [fill=black] (-0.756188732394369,3.8065867276887912) circle (1.5pt);
\draw [fill=black] (-1.0747112676056363,3.8065867276887912) circle (1.5pt);
\draw [fill=black] (-0.9248183098591576,3.8065867276887912) circle (1.5pt);
\draw [fill=black] (2.372826760563372,3.4607890160183103) circle (1.5pt);
\draw [fill=black] (-2.142698591549297,4.728713958810073) circle (1.5pt);
\draw [fill=black] (-2.049015492957749,2.449010526315792) circle (1.5pt);
\draw [fill=black] (-2.105225352112677,3.9346599542334135) circle (1.5pt);
\draw [fill=black] (-2.049015492957749,5.509960640732271) circle (1.5pt);
\draw [fill=black] (-2.0864887323943675,3.089376659038905) circle (1.5pt);
\draw [fill=black] (-2.0864887323943675,3.4095597254004613) circle (1.5pt);
\draw [fill=black] (0.4242183098591501,4.421338215102979) circle (1.5pt);
\draw [fill=black] (0.40548169014083973,2.3721665903890186) circle (1.5pt);
\draw [fill=black] (0.34927183098591075,3.8834306636155644) circle (1.5pt);
\draw [fill=black] (0.3867450704225304,5.420309382151035) circle (1.5pt);
\draw [fill=black] (0.3867450704225304,3.050954691075518) circle (1.5pt);
\draw [fill=black] (4.527538028169003,4.690291990846687) circle (1.5pt);
\draw [fill=black] (4.490064788732384,2.4105885583524054) circle (1.5pt);
\draw [fill=black] (4.508801408450693,5.407502059496573) circle (1.5pt);
\draw [fill=black] (4.527538028169003,3.0253400457665935) circle (1.5pt);
\draw [fill=black] (4.508801408450693,3.4223670480549235) circle (1.5pt);
\draw [fill=black] (7.03824507042252,4.639062700228838) circle (1.5pt);
\draw [fill=black] (7.03824507042252,2.449010526315792) circle (1.5pt);
\draw [fill=black] (7.048309859154933,3.8834306636155627) circle (1.5pt);
\draw [fill=black] (7.019508450704211,5.458731350114422) circle (1.5pt);
\draw [fill=black] (7.07571830985914,3.0637620137299804) circle (1.5pt);
\draw [fill=black] (7.05698169014083,3.4223670480549235) circle (1.5pt);
\draw [fill=black] (5.764154929577449,3.793779405034329) circle (1.5pt);
\draw [fill=black] (5.4456323943661875,3.793779405034329) circle (1.5pt);
\draw [fill=black] (5.595525352112661,3.793779405034329) circle (1.5pt);
\draw [fill=black] (0.3867450704225304,3.4095597254004613) circle (1.5pt);
\draw [fill=black] (-2.105225352112677,4.408530892448517) circle (1.5pt);
\draw [fill=black] (0.40548169014084023,4.728713958810073) circle (1.5pt);
\draw [fill=black] (4.527538028169003,4.395723569794055) circle (1.5pt);
\draw [fill=black] (7.03824507042252,4.318879633867281) circle (1.5pt);
\draw [fill=black] (2.3728267605633726,4.382916247139592) circle (1.5pt);
\draw [fill=black] (4.527538028169017,3.806586727688791) circle (1.5pt);
\end{scriptsize}
\end{tikzpicture}
\begin{center}
\noindent  Figure 6: The set consisting of $2$ vertices corresponding to later section of part (c), namely  $\{v_{p,r} |$ $ p \in B_{w,j}(i)$ or else $r=l$  and $p \in B_{1,j}(i) \}$ 
\end{center}
\end{flushleft}
\vspace{4pt}

\vspace{8pt}
\noindent Therefore, the degree of $v_{i,l}$ will be equal to  $2k_1(j-1)+2k_2$ when part (a)+(b) situation arrises or when part (a)+(c) situation arrises.
\end{proof}

\begin{lemma}
\label{lemma 2}
There exist some uniform graphs of degree $d$ on $V(K_{j \times s})$ if $j$ is even or $s$ is even.
\end{lemma}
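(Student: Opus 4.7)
The plan is to mimic the circulant-style construction of Lemma~\ref{lemma 1}, exploiting the extra flexibility afforded by the even parity of $j$ or $s$. The key observation is that under either hypothesis $|V(K_{j\times s})|=js$ is even, so the handshake lemma imposes no parity restriction on $d$; this is precisely what allows $d$ to be odd, whereas Lemma~\ref{lemma 1} forced $d$ to be even.

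For even $d$, I write $d$ in one of the forms $2k_1(j-1)+2k_2$ or $(2k_1+1)(j-1)+2k_2$ used in Lemma~\ref{lemma 1} and reuse parts (a), (b), (c) of that proof. The only new subtlety is that when $s$ is even the antipodal layer $c=s/2$ (and symmetrically, when $j$ is even, $c=j/2$) collapses, since $\sigma_{s/2,s}(l)=\{l+s/2\}$ has one element rather than two. Whenever the decomposition of $d$ calls for this layer I treat it as a ``half-layer'' contributing $j-1$ vertices instead of $2(j-1)$ per partite set, and adjust $k_1$ or $k_2$ accordingly. The symmetry $r\in B_{k,s}(l) \iff l\in B_{k,s}(r)$ still holds in this boundary case, so the adjacency is well-defined and the resulting spanning subgraph is $d$-regular.

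For odd $d$, I first construct a $(d-1)$-regular spanning subgraph $G_{d-1}$ by the preceding case and then augment it with a perfect matching $M$ of $V(K_{j\times s})$ consisting of edges of $K_{j\times s}$ that are edge-disjoint from $G_{d-1}$. When $j$ is even the obvious layered matching $M=\{\, v_{2t,l}\, v_{2t+1,l}:0\le t<j/2,\ 0\le l<s\,\}$ suffices. When $j$ is odd but $s$ is even I instead use the diagonal matching
\[
M=\{\, v_{i,2l}\, v_{i+1 \bmod j,\, 2l+1} : 0\le i<j,\ 0\le l<s/2\,\},
\]
which is readily checked to cover every vertex of $K_{j\times s}$ exactly once and to consist only of inter-partite edges. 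Edge-disjointness from $G_{d-1}$ is arranged by reserving these specific edges for $M$ and avoiding them in the construction of $G_{d-1}$; since $d-1<s(j-1)$ there is always enough slack in the remaining layers.

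The main obstacle is the bookkeeping around the antipodal layers $c=s/2$ and $c=j/2$: these collapse from two vertices to one, so the decomposition of $d$ in the even case fractures into several sub-cases depending on whether such a layer appears, and the matching $M$ in the odd case must be chosen compatibly with those sub-cases. Once this is organized, regularity of the constructed graph follows exactly as in Lemma~\ref{lemma 1} from the symmetry of the cyclic relations $B_{k,s}$ and $B_{k,j}$.
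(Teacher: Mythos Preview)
Your overall strategy---build a $(d-1)$-regular circulant subgraph and then add a perfect matching to reach odd $d$---is exactly the paper's approach. The gap is in your choice of matching and the unsupported claim of edge-disjointness.

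For $j$ even you take $M=\{v_{2t,l}v_{2t+1,l}\}$, i.e.\ edges with $r=l$ and $p=i\pm 1$. But in the circulant construction of $G_{d-1}$, whenever the decomposition of $d-1$ involves $k_2\ge 1$, part~(b) already contains all edges with $r=l$ and $p\in B_{k_2,j}(i)\supseteq\{i-1,i+1\}$, so $M\subseteq E(G_{d-1})$. Similarly, for $j$ odd and $s$ even your matching uses edges with $|r-l|=1$ and $|p-i|=1$, which lie inside part~(a) as soon as $k_1\ge 1$. Your remedy---``reserve these edges and avoid them in $G_{d-1}$''---is not a proof: removing those edges destroys the circulant symmetry, and you give no argument that what remains can still be made $(d-1)$-regular.

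The paper sidesteps this entirely by choosing the \emph{antipodal} matching. When $j$ is even it uses $p\in\sigma_{j/2,j}(i)$ (pairing $v_{i,l}$ with $v_{i+j/2,l}$); when $s$ is even it uses $r\in\sigma_{s/2,s}(l)$ in combination with suitable $p$. Because the even-$d$ construction only uses layers $B_{k_2,j}(i)$ with $k_2<j/2$ (respectively $B_{k_1,s}(l)$ with $k_1<s/2$), the antipodal layer is automatically untouched, and edge-disjointness is immediate with no ``reservation'' needed. Replacing your matchings with these antipodal ones fixes the argument; otherwise you owe a genuine construction of the modified $G_{d-1}$.
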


\begin{proof} 
We approach this problem by considering the following three cases.

\vspace{4pt}

\noindent \textbf{Case 1)} If $j$ is even and $s$ is odd.

\noindent  If $d=2k_1(j-1)+2k_2$ for some non negative integers $k_1$ and $k_2$ such that $2k_1 \le s-1$ and $0<2k_2 \le j-2$. Construct  a graph by connecting the vertices $v_{i,l}$ and $v_{p,r}$ if one of the following situations hold

\vspace{4pt}
\noindent \textbf{a)} If $r \in B_{k_1,s}(l)$ and $p \neq i$. \hspace{14pt}\textbf{b)}  If $r=l$ and $p \in B_{k_2,j}(i)$.

\vspace{4pt}

\noindent The vertex $v_{i,l}$ will be either adjacent the vertices corresponding to part (a) or part (b) and they are respectively equal to $2k_1(j -1)$ and $2k_2$. Therefore, we get that the degree of $v_{i,l}$ is equal to $2k_1(j-1)+2k_2$ as required.

\vspace{9pt}
\noindent If $d=2k_1(j-1)+2k_2+1$ for some non negative integers $k_1$ and $k_2$ such that $2k_1 \le s-1$ and $ 2k_2\le j-2$. Construct  a graph by connecting the vertices $v_{i,l}$ and $v_{p,r}$ if one of the following situations hold

\vspace{4pt}
\noindent \textbf{a)}  If $r \in B_{k_1,s}(l)$ and $p \neq i$. \hspace{14pt} \textbf{b)}  If $r=l$ and $p \in B_{k_2,j}(i)$.

\vspace{4pt}
\noindent \textbf{c)}  If $r=l$ and $p \in \sigma_{\frac{j}{2},j}(i)$.

\begin{tikzpicture}[line cap=round,line join=round,>=triangle 45,x=1.0cm,y=1.0cm]
\clip(-4.908904993597956,0.6967359267734548) rectangle (5.021503457106274,6.832607780320368);
\draw [rotate around={87.91580736590971:(-3.3954971190781054,3.661631121281466)}] (-3.3954971190781054,3.661631121281466) ellipse (1.547757610453977cm and 0.8582776916516105cm);
\draw [rotate around={-89.17565717915494:(3.254711267605635,3.582053089244852)}] (3.254711267605635,3.582053089244852) ellipse (1.5866417120357312cm and 0.7648084220062353cm);
\draw (-3.9380074263764446,2.873980778032037) node[anchor=north west] {$v_{0,0}$};
\draw [rotate around={0.0:(0.20363719590267826,5.790557437070943)}] (0.20363719590267826,5.790557437070943) ellipse (2.3718147816834696cm and 0.5206116257827407cm);
\draw [rotate around={-0.15358490033993571:(0.33820019206145846,1.5915020594965694)}] (0.33820019206145846,1.5915020594965694) ellipse (2.437053978284441cm and 0.4819305243903091cm);
\draw (-1.468180281690144,6.820964759725403) node[anchor=north west] {$v_{1,0}$};
\draw (4.016539308578745,4.993010526315791) node[anchor=north west] {$v_{2,0}$};
\draw (-3.9380074263764446,4.876580320366133) node[anchor=north west] {$v_{0,2}$};
\draw (1.6318786171574886,6.797678718535471) node[anchor=north west] {$v_{1,2}$};
\draw (4.1357723431498075,2.75755057208238) node[anchor=north west] {$v_{2,2}$};
\draw (-4.023173879641489,3.921852631578948) node[anchor=north west] {$v_{0,1}$};
\draw (4.152805633802816,3.8287084668192226) node[anchor=north west] {$v_{2,1}$};
\draw (1.5978120358514707,1.0925986270022878) node[anchor=north west] {$v_{3,0}$};
\draw (0.04778258642765439,1.0809556064073222) node[anchor=north west] {$v_{3,1}$};
\draw (-1.4511469910371348,1.0925986270022878) node[anchor=north west] {$v_{3,2}$};
\draw (-3.0607929577464787,3.652898855835243)-- (3.1972380281690143,3.6272842105263186);
\draw (0.14316901408450813,5.9197949656750595)-- (0.3305352112676062,1.3988100686498866);
\draw (-3.0607929577464787,3.652898855835243)-- (0.14316901408450813,5.9197949656750595);
\draw (-3.0607929577464787,3.652898855835243)-- (0.3305352112676062,1.3988100686498866);
\draw (3.1972380281690143,3.6272842105263186)-- (0.14316901408450813,5.9197949656750595);
\draw (3.1972380281690143,3.6272842105263186)-- (0.3305352112676062,1.3988100686498866);
\draw (-3.06,2.66)-- (-1.262077464788731,5.932602288329522);
\draw (-1.262077464788731,5.932602288329522)-- (3.1597647887323945,4.626255377574374);
\draw (3.1597647887323945,4.626255377574374)-- (1.8669380281690149,1.4116173913043493);
\draw (-3.06,2.66)-- (1.8669380281690149,1.4116173913043493);
\draw (-3.02,4.58)-- (1.7919915492957754,5.958216933638447);
\draw (1.7919915492957754,5.958216933638447)-- (3.234711267605634,2.5770837528604145);
\draw (3.234711267605634,2.5770837528604145)-- (-1.0747112676056336,1.4116173913043493);
\draw (-1.0747112676056336,1.4116173913043493)-- (-3.02,4.58);
\draw (-3.06,2.66)-- (3.1597647887323945,4.626255377574374);
\draw (-1.262077464788731,5.932602288329522)-- (1.8669380281690149,1.4116173913043493);
\draw (-3.02,4.58)-- (3.234711267605634,2.5770837528604145);
\draw (1.7919915492957754,5.958216933638447)-- (-1.0747112676056336,1.4116173913043493);
\draw (-4.568239180537776,0.4988045766590382) node[anchor=north west] {$Example: When$  $d=3=2\times 0\times (4-1)+2 \times 1 +1$};
\draw (-0.05441715749039942,6.855893821510299) node[anchor=north west] {$v_{1,1}$};
\begin{scriptsize}
\draw [fill=black] (-3.02,4.58) circle (1.5pt);
\draw [fill=black] (-3.0607929577464787,3.652898855835243) circle (1.5pt);
\draw [fill=black] (-3.06,2.66) circle (1.5pt);
\draw [fill=black] (3.1597647887323945,4.626255377574374) circle (1.5pt);
\draw [fill=black] (3.1972380281690143,3.6272842105263186) circle (1.5pt);
\draw [fill=black] (3.234711267605634,2.5770837528604145) circle (1.5pt);
\draw [fill=black] (-1.262077464788731,5.932602288329522) circle (1.5pt);
\draw [fill=black] (0.14316901408450813,5.9197949656750595) circle (1.5pt);
\draw [fill=black] (1.7919915492957754,5.958216933638447) circle (1.5pt);
\draw [fill=black] (-1.0747112676056336,1.4116173913043493) circle (1.5pt);
\draw [fill=black] (0.3305352112676062,1.3988100686498866) circle (1.5pt);
\draw [fill=black] (1.8669380281690149,1.4116173913043493) circle (1.5pt);
\end{scriptsize}
\end{tikzpicture}
\begin{center}
\noindent  Figure 7: In the case(1) when $d=3=2k_1(j-1)+2k_2+1=2\times 0\times (4-1)+2\times 1+1$
\end{center}
\vspace{4pt}

\vspace{8pt}

\noindent The vertex $v_{i,l}$  will be either adjacent the vertices corresponding to part
(a), part (b) or part (c) and they are respectively equal to $2k_1(j-1)$, $ $ 
$2k_2$  and one. Therefore, we get that the degree of $v_{i,l}$ is equal to $2k_1(j-1)+
2k_2 + 1$ as required.

\vspace{12pt}

\noindent If $d=(2k_1+1)(j-1)+m$ for some non negative integers $k_1$, $k_2$ and $m$ such that $2k_1 \le s-3$ and $0<m \le j-1$ where $m=2k_2$ or $m=2k_2+1$. Construct  a graph by connecting the vertices $v_{i,l}$ and $v_{p,r}$ if one of the following situations hold

\noindent \textbf{a)} If $r \in B_{k_1,s}(l)$ and $p \neq i$.

\noindent \textbf{b)} If there exists $w$ such that  $w=(j-1+m) $ div $ 4$, $r \in \sigma_{k_1+1,s}(l)$   and $p \in B_{w,j}(i)$.

\noindent \textbf{c)} If there exists $w$ such that  $(j-1+m) -4w =1$,   $r=l$ and $p \in \sigma_{\frac{j}{2},j}(i)$.

\noindent \textbf{d)}  If there exists $w$ such that  $(j-1+m) -4w =2$,   $r=l$ and $p \in \sigma_{1,j}(i)$.

\noindent \textbf{e)} If there exists $w$ such that  $(j-1+m) -4w =3$,   $r=l$, $p \in \sigma_{1,j}(i)$ and $p \in \sigma_{\frac{j}{2},j}(i)$ (as $j$ is even). 

\begin{tikzpicture}[line cap=round,line join=round,>=triangle 45,x=1.0cm,y=1.0cm]
\clip(-4.56483252240717,0.3788814645308922) rectangle (5.484808962868118,6.165462700228835);
\draw (4.769410755441742,4.10464805491991) node[anchor=north west] {$v_{2,2}$};
\draw (1.9929843790012807,6.165462700228835) node[anchor=north west] {$v_{1,2}$};
\draw (-3.014803072983354,5.373737299771169) node[anchor=north west] {$v_{0,2}$};
\draw (4.5990778489116515,4.686799084668194) node[anchor=north west] {$v_{2,1}$};
\draw (0.8006540332906532,6.188748741418767) node[anchor=north west] {$v_{1,1}$};
\draw (-3.662068117797695,4.7799432494279195) node[anchor=north west] {$v_{0,1}$};
\draw (4.019945966709347,5.164162929061787) node[anchor=north west] {$v_{2,0}$};
\draw (-0.2894765685019204,6.1538196796338696) node[anchor=north west] {$v_{1,0}$};
\draw (-4.190100128040973,4.116291075514876) node[anchor=north west] {$v_{ 0,0}$};
\draw [rotate around={0.19292657820113565:(1.0476367477592723,5.437773913043484)}] (1.0476367477592723,5.437773913043484) ellipse (1.7483499660249686cm and 0.2601367191474572cm);
\draw [rotate around={44.34721839696155:(-2.265338284250954,4.273471853546919)}] (-2.265338284250954,4.273471853546919) ellipse (1.277571838221598cm and 0.4624295414284397cm);
\draw [rotate around={-46.3898171681946:(3.8836796414852803,4.250185812356978)}] (3.8836796414852803,4.250185812356978) ellipse (1.1341141027908277cm and 0.45875593740778126cm);
\draw [rotate around={-37.589712097674635:(-2.2568216389244564,2.2766938215103014)}] (-2.2568216389244564,2.2766938215103014) ellipse (1.2147938440082502cm and 0.5001952216294918cm);
\draw [rotate around={-0.21055950044071972:(0.8176873239436626,1.3976457665903907)}] (0.8176873239436626,1.3976457665903907) ellipse (1.6472027901808877cm and 0.4515339493587839cm);
\draw [rotate around={50.49894937040257:(4.0625291933418675,2.241764759725401)}] (4.0625291933418675,2.241764759725401) ellipse (1.0566703694701123cm and 0.46138358441007754cm);
\draw (-1.856539308578745,4.814872311212819)-- (1.9759510883482743,5.431952402745995);
\draw (1.9759510883482743,5.431952402745995)-- (4.241378745198464,3.8717876430205993);
\draw (4.241378745198464,3.8717876430205993)-- (3.696313444302177,1.8109729977116729);
\draw (3.696313444302177,1.8109729977116729)-- (-0.20431011523687556,1.2986800915331829);
\draw (-0.20431011523687556,1.2986800915331829)-- (-2.793370294494238,2.695842562929065);
\draw (-2.793370294494238,2.695842562929065)-- (-1.856539308578745,4.814872311212819);
\draw (-2.3845713188220223,4.3375084668192265)-- (0.9880202304737525,5.443595423340967);
\draw (0.9880202304737525,5.443595423340967)-- (3.9518128040973113,4.290936384439364);
\draw (3.9518128040973113,4.290936384439364)-- (4.071045838668374,2.2184787185354717);
\draw (4.071045838668374,2.2184787185354717)-- (0.7495541613316263,1.2986800915331826);
\draw (0.7495541613316263,1.2986800915331826)-- (-2.26533828425096,2.2766938215103005);
\draw (-2.26533828425096,2.2766938215103005)-- (-2.3845713188220223,4.3375084668192265);
\draw (-2.8615034571062736,3.801929519450805)-- (0.03415595390524991,5.443595423340967);
\draw (0.03415595390524991,5.443595423340967)-- (3.4408140845070423,4.710085125858129);
\draw (3.4408140845070423,4.710085125858129)-- (4.394678361075544,2.6841995423340994);
\draw (4.394678361075544,2.6841995423340994)-- (1.7374850192061462,1.287037070938217);
\draw (1.7374850192061462,1.287037070938217)-- (-1.7202729833546733,1.84590205949657);
\draw (-1.7202729833546733,1.84590205949657)-- (-2.8615034571062736,3.801929519450805);
\draw (-2.3845713188220223,4.3375084668192265)-- (-2.793370294494238,2.695842562929065);
\draw (-2.3845713188220223,4.3375084668192265)-- (0.03415595390524991,5.443595423340967);
\draw (-2.3845713188220223,4.3375084668192265)-- (1.9759510883482743,5.431952402745995);
\draw (-2.3845713188220223,4.3375084668192265)-- (-1.7202729833546733,1.84590205949657);
\draw (-1.856539308578745,4.814872311212819)-- (0.03415595390524991,5.443595423340967);
\draw (-1.856539308578745,4.814872311212819)-- (0.9880202304737525,5.443595423340967);
\draw (-1.856539308578745,4.814872311212819)-- (-2.26533828425096,2.2766938215103005);
\draw (-1.856539308578745,4.814872311212819)-- (-1.7202729833546733,1.84590205949657);
\draw (-2.8615034571062736,3.801929519450805)-- (-2.793370294494238,2.695842562929065);
\draw (-2.8615034571062736,3.801929519450805)-- (-2.26533828425096,2.2766938215103005);
\draw (-2.8615034571062736,3.801929519450805)-- (0.9880202304737525,5.443595423340967);
\draw (-2.8615034571062736,3.801929519450805)-- (1.9759510883482743,5.431952402745995);
\draw (0.03415595390524991,5.443595423340967)-- (3.9518128040973113,4.290936384439364);
\draw (0.03415595390524991,5.443595423340967)-- (4.241378745198464,3.8717876430205993);
\draw (0.9880202304737525,5.443595423340967)-- (3.4408140845070423,4.710085125858129);
\draw (0.9880202304737525,5.443595423340967)-- (4.241378745198464,3.8717876430205993);
\draw (1.9759510883482743,5.431952402745995)-- (3.4408140845070423,4.710085125858129);
\draw (1.9759510883482743,5.431952402745995)-- (3.9518128040973113,4.290936384439364);
\draw (3.4408140845070423,4.710085125858129)-- (4.071045838668374,2.2184787185354717);
\draw (3.4408140845070423,4.710085125858129)-- (3.696313444302177,1.8109729977116729);
\draw (3.9518128040973113,4.290936384439364)-- (4.394678361075544,2.6841995423340994);
\draw (3.9518128040973113,4.290936384439364)-- (3.696313444302177,1.8109729977116729);
\draw (4.241378745198464,3.8717876430205993)-- (4.394678361075544,2.6841995423340994);
\draw (4.241378745198464,3.8717876430205993)-- (4.071045838668374,2.2184787185354717);
\draw (4.394678361075544,2.6841995423340994)-- (0.7495541613316263,1.2986800915331826);
\draw (4.394678361075544,2.6841995423340994)-- (-0.20431011523687556,1.2986800915331829);
\draw (4.071045838668374,2.2184787185354717)-- (1.7374850192061462,1.287037070938217);
\draw (4.071045838668374,2.2184787185354717)-- (-0.20431011523687556,1.2986800915331829);
\draw (3.696313444302177,1.8109729977116729)-- (1.7374850192061462,1.287037070938217);
\draw (3.696313444302177,1.8109729977116729)-- (0.7495541613316263,1.2986800915331826);
\draw (1.7374850192061462,1.287037070938217)-- (-2.26533828425096,2.2766938215103005);
\draw (1.7374850192061462,1.287037070938217)-- (-2.793370294494238,2.695842562929065);
\draw (0.7495541613316263,1.2986800915331826)-- (-1.7202729833546733,1.84590205949657);
\draw (0.7495541613316263,1.2986800915331826)-- (-2.793370294494238,2.695842562929065);
\draw (-0.20431011523687556,1.2986800915331829)-- (-1.7202729833546733,1.84590205949657);
\draw (-0.20431011523687556,1.2986800915331829)-- (-2.26533828425096,2.2766938215103005);
\draw (4.905677080665813,2.9054169336384446) node[anchor=north west] {$v_{3,0}$};
\draw (4.7012775928297055,2.393124027459955) node[anchor=north west] {$v_{3,1}$};
\draw (4.2924786171574905,1.9274032036613276) node[anchor=north west] {$v_{3,2}$};
\draw (1.5671521126760566,0.8562453089244851) node[anchor=north west] {$v_{4,0}$};
\draw (0.6303211267605636,0.8329592677345538) node[anchor=north west] {$v_{4,1}$};
\draw (-0.442776184379001,0.8678883295194508) node[anchor=north west] {$v_{4,2}$};
\draw (-3.4065687580025603,1.9274032036613276) node[anchor=north west] {$v_{5,0}$};
\draw (-3.9005341869398205,2.3232659038901606) node[anchor=north west] {$v_{5,1}$};
\draw (-4.258233290653009,2.893773913043479) node[anchor=north west] {$v_{5,2}$};
\begin{scriptsize}
\draw [fill=black] (-1.856539308578745,4.814872311212819) circle (1.5pt);
\draw [fill=black] (-2.8615034571062736,3.801929519450805) circle (1.5pt);
\draw [fill=black] (-0.20431011523687556,1.2986800915331829) circle (1.5pt);
\draw [fill=black] (4.241378745198464,3.8717876430205993) circle (1.5pt);
\draw [fill=black] (3.4408140845070423,4.710085125858129) circle (1.5pt);
\draw [fill=black] (3.9518128040973113,4.290936384439364) circle (1.5pt);
\draw [fill=black] (0.7495541613316263,1.2986800915331826) circle (1.5pt);
\draw [fill=black] (-2.3845713188220223,4.3375084668192265) circle (1.5pt);
\draw [fill=black] (1.7374850192061462,1.287037070938217) circle (1.5pt);
\draw [fill=black] (0.03415595390524991,5.443595423340967) circle (1.5pt);
\draw [fill=black] (0.9880202304737525,5.443595423340967) circle (1.5pt);
\draw [fill=black] (1.9759510883482743,5.431952402745995) circle (1.5pt);
\draw [fill=black] (-2.793370294494238,2.695842562929065) circle (1.5pt);
\draw [fill=black] (-2.26533828425096,2.2766938215103005) circle (1.5pt);
\draw [fill=black] (-1.7202729833546733,1.84590205949657) circle (1.5pt);
\draw [fill=black] (4.394678361075544,2.6841995423340994) circle (1.5pt);
\draw [fill=black] (4.071045838668374,2.2184787185354717) circle (1.5pt);
\draw [fill=black] (3.696313444302177,1.8109729977116729) circle (1.5pt);
\end{scriptsize}
\end{tikzpicture}
\begin{center}
\noindent  Figure 8: In the case(1) when  $d=6=(2k_1+1)(j-1)+2k_2+1=(2\times 0 +1)(6-1)+2\times 0 +1$
\end{center}

\vspace{8pt}
\noindent It should be noted that the vertex sets of part (b), (c), (d) and part
(e) are disjoint. Therefore, $v_{i,l}$ will be either adjacent the vertices
corresponding to part (a) and part (b) or (a) and part (c) or (a) and part (d)
or else part (a) and part (e) according
to whether $4w=j-1+2k_2$, $(j-1+2k_2)-4w=1$, $(j-1+2k_2)-4w=2$ or $(j-1+2k_2)-4w=3$  respectively. As done before, in all these scenarios
we get $d=(2k_1+1)(j-1)+m$ as required.

\vspace{10pt}
\noindent \textbf{Case 2)} If $j$ is even and $s$ is even.

\vspace{8pt}

\noindent  If $d=2k_1(j-1)+2k_2$ for some non negative integers $k_1$ and $k_2$ such that $2k_1 \le s-2$ and $0<k_2 \le j-1$. Construct  a graph by connecting the vertices $v_{i,l}$ and $v_{p,r}$ if one of the following situations hold

\vspace{4pt}
\noindent \textbf{a)}  If $r \in B_{k_1,s}(l)$ and $p \neq i$.

\vspace{4pt}
\noindent \textbf{b)}  If $k_2 < \frac{j}{2}$, $r=l$ and $p \in B_{k_2,j}(i)$.

\vspace{4pt}

\noindent \textbf{c)}  If $\frac{j}{2} \le k_2 <  j-1$ and  (($r=l$ and $p \in B_{\frac{j-2}{2},j}(i)$) or ($r \in \sigma_{\frac{s}{2},s}(l)$  and      $p \in B_{\frac{2k_2-(j-2)}{2},j}(i)$)).

\vspace{4pt}
\noindent \textbf{d)}  If $k_2 =j-1$ and  (($r=l$ and $p\neq i$) or ($r \in \sigma_{\frac{s}{2},s}(l)$ and $p\neq i$)).

\vspace{10pt}
\noindent It should be noted that the vertex sets of part (b), (c) and part (d)
are disjoint. Therefore, $v_{i,l}$ will be either adjacent the vertices corresponding
to part (a) and part (b) or (a) and part (c) or else  part (a) and part (d) according to whether
 $k_2<\frac{j}{2}$, $\frac{j}{2} \le k_2 <  j-1$ or $k_2 = j-1$ respectively. As
done before, in all these scenarios we get $d = 2k_1(j- 1) + 2k_2$ as required.

\vspace{6pt}

\noindent If $d=2k_1(j-1)+2k_2+1$ for some non negative integers $k_1$ and $k_2$ where such that $2k_1 \le s-2$ and $ k_2 < j-1$. Construct  a graph by connecting the vertices $v_{i,l}$ and $v_{p,r}$ if one of the following situations hold

\vspace{4pt}
\noindent \textbf{a)}  If $r \in B_{k_1,s}(l)$ and $p \neq i$.

\vspace{4pt}
\noindent \textbf{b)} If $k_2 \le \frac{j-2}{2}$, $r=l$ and ($p \in B_{k_2,j}(i)$ or $p \in \sigma_{\frac{j}{2},j}(i)$).

\vspace{4pt}

\noindent \textbf{c)}  If $k_2 \ge \frac{j}{2}$ and  (($r=l$ and  $p\neq i$) or  ($r \in \sigma_{\frac{s}{2},s}(l)$ and $p \in B_{\frac{2k_2-(j-2)}{2},j}(i)$)).

\vspace{2pt}

\begin{tikzpicture}[line cap=round,line join=round,>=triangle 45,x=1.0cm,y=1.0cm]
\clip(-4.994071446863002,0.5488293761738334) rectangle (5.072603329065303,6.851146917787157);
\draw [rotate around={87.91580736590971:(-3.3954971190781054,3.661631121281466)}] (-3.3954971190781054,3.661631121281466) ellipse (1.547757610453977cm and 0.8582776916516105cm);
\draw [rotate around={-89.17565717915494:(3.0162451984635084,3.5820530892448508)}] (3.0162451984635084,3.5820530892448508) ellipse (1.5866417120357255cm and 0.7648084220062332cm);
\draw (-3.972074007682463,2.933490067595092) node[anchor=north west] {$v_{0,0}$};
\draw [rotate around={0.0:(0.20363719590267826,5.790557437070943)}] (0.20363719590267826,5.790557437070943) ellipse (2.3718147816834696cm and 0.5206116257827407cm);
\draw [rotate around={-0.15358490033993571:(0.33820019206145846,1.5915020594965694)}] (0.33820019206145846,1.5915020594965694) ellipse (2.437053978284441cm and 0.4819305243903091cm);
\draw (-1.4000471190781076,6.868180208440167) node[anchor=north west] {$v_{1,0}$};
\draw (3.7610399487836124,4.960451655303161) node[anchor=north west] {$v_{2,0}$};
\draw (-3.989107298335472,4.347253191794838) node[anchor=north west] {$v_{0,2}$};
\draw (0.7461475032010234,6.851146917787158) node[anchor=north west] {$v_{1,2}$};
\draw (3.863239692701666,3.4444887871853616) node[anchor=north west] {$v_{2,2}$};
\draw (-3.972074007682463,3.6999881469804965) node[anchor=north west] {$v_{0,1}$};
\draw (0.4395482714468618,1.1109279677231307) node[anchor=north west] {$v_{3,1}$};
\draw (3.880272983354675,4.159886994611739) node[anchor=north west] {$v_{2,1}$};
\draw (-3.886907554417418,5.079684689874224) node[anchor=north west] {$v_{0,3}$};
\draw (1.6829784891165172,6.851146917787158) node[anchor=north west] {$v_{1,3}$};
\draw (3.846206402048657,2.780190451718011) node[anchor=north west] {$v_{2,3}$};
\draw (1.717045070422535,1.1279612583761398) node[anchor=north west] {$v_{3,0}$};
\draw (-0.3439830985915511,6.868180208440167) node[anchor=north west] {$v_{1,1}$};
\draw (-0.6165157490396947,1.1449945490291489) node[anchor=north west] {$v_{3,2}$};
\draw (-1.6555464788732424,1.1790611303351668) node[anchor=north west] {$v_{3,3}$};
\draw (1.7851782330345656,5.866237070938216)-- (-3.0693096030729885,4.760150114416477);
\draw (1.7851782330345656,5.866237070938216)-- (3.1989413572343097,2.513047139588101);
\draw (3.1989413572343097,2.513047139588101)-- (-1.4852135723431552,1.5001043478260867);
\draw (-1.4852135723431552,1.5001043478260867)-- (-3.0693096030729885,4.760150114416477);
\draw (-3.0522763124199797,4.038282837528605)-- (0.9675802816901353,5.877880091533182);
\draw (0.9675802816901353,5.877880091533182)-- (3.2330079385403274,3.1883423340961103);
\draw (3.2330079385403274,3.1883423340961103)-- (-0.48024942381562646,1.4418892448512581);
\draw (-0.48024942381562646,1.4418892448512581)-- (-3.0522763124199797,4.038282837528605);
\draw (3.2330079385403274,3.1883423340961103)-- (-3.0522763124199797,4.038282837528605);
\draw (-0.48024942381562646,1.4418892448512581)-- (0.9675802816901353,5.877880091533182);
\draw (1.7851782330345656,5.866237070938216)-- (-1.4852135723431552,1.5001043478260867);
\draw (-3.0693096030729885,4.760150114416477)-- (3.1989413572343097,2.513047139588101);
\draw (-3.103376184379006,3.3862736842105265)-- (-0.15661690140845608,5.9244521739130445);
\draw (-0.15661690140845608,5.9244521739130445)-- (3.2330079385403274,3.86363752860412);
\draw (3.2330079385403274,3.86363752860412)-- (0.6269144686299561,1.4418892448512584);
\draw (0.6269144686299561,1.4418892448512584)-- (-3.103376184379006,3.3862736842105265);
\draw (-3.103376184379006,3.3862736842105265)-- (3.2330079385403274,3.86363752860412);
\draw (-0.15661690140845608,5.9244521739130445)-- (0.6269144686299561,1.4418892448512584);
\draw (-3.154476056338033,2.617834324942792)-- (-1.1615810499359849,5.912809153318079);
\draw (-1.1615810499359849,5.912809153318079)-- (3.2159746478873186,4.608790846681923);
\draw (3.2159746478873186,4.608790846681923)-- (1.849904737516006,1.3999743707093837);
\draw (1.849904737516006,1.3999743707093837)-- (-3.154476056338033,2.617834324942792);
\draw (-3.154476056338033,2.617834324942792)-- (3.2159746478873186,4.608790846681923);
\draw (-1.1615810499359849,5.912809153318079)-- (1.849904737516006,1.3999743707093837);
\draw (-3.0693096030729885,4.760150114416477)-- (-0.15661690140845608,5.9244521739130445);
\draw (-3.0693096030729885,4.760150114416477)-- (0.6269144686299561,1.4418892448512584);
\draw (-3.0522763124199797,4.038282837528605)-- (-1.1615810499359849,5.912809153318079);
\draw (-3.0522763124199797,4.038282837528605)-- (1.849904737516006,1.3999743707093837);
\draw (-3.103376184379006,3.3862736842105265)-- (1.7851782330345656,5.866237070938216);
\draw (-3.103376184379006,3.3862736842105265)-- (-1.4852135723431552,1.5001043478260867);
\draw (-3.154476056338033,2.617834324942792)-- (0.9675802816901353,5.877880091533182);
\draw (-3.154476056338033,2.617834324942792)-- (-0.48024942381562646,1.4418892448512581);
\draw (-1.1615810499359849,5.912809153318079)-- (3.2330079385403274,3.1883423340961103);
\draw (-0.15661690140845608,5.9244521739130445)-- (3.1989413572343097,2.513047139588101);
\draw (0.9675802816901353,5.877880091533182)-- (3.2159746478873186,4.608790846681923);
\draw (1.7851782330345656,5.866237070938216)-- (3.2330079385403274,3.86363752860412);
\draw (3.2330079385403274,3.86363752860412)-- (-1.4852135723431552,1.5001043478260867);
\draw (3.2159746478873186,4.608790846681923)-- (-0.48024942381562646,1.4418892448512581);
\draw (3.2330079385403274,3.1883423340961103)-- (1.849904737516006,1.3999743707093837);
\draw (3.1989413572343097,2.513047139588101)-- (0.6269144686299561,1.4418892448512584);
\begin{scriptsize}
\draw [fill=black] (-3.0522763124199797,4.038282837528605) circle (1.5pt);
\draw [fill=black] (-3.103376184379006,3.3862736842105265) circle (1.5pt);
\draw [fill=black] (-3.154476056338033,2.617834324942792) circle (1.5pt);
\draw [fill=black] (3.2159746478873186,4.608790846681923) circle (1.5pt);
\draw [fill=black] (3.2330079385403274,3.86363752860412) circle (1.5pt);
\draw [fill=black] (3.2330079385403274,3.1883423340961103) circle (1.5pt);
\draw [fill=black] (-1.1615810499359849,5.912809153318079) circle (1.5pt);
\draw [fill=black] (-0.15661690140845608,5.9244521739130445) circle (1.5pt);
\draw [fill=black] (0.9675802816901353,5.877880091533182) circle (1.5pt);
\draw [fill=black] (-0.48024942381562646,1.4418892448512581) circle (1.5pt);
\draw [fill=black] (0.6269144686299561,1.4418892448512584) circle (1.5pt);
\draw [fill=black] (1.849904737516006,1.3999743707093837) circle (1.5pt);
\draw [fill=black] (-3.0693096030729885,4.760150114416477) circle (1.5pt);
\draw [fill=black] (1.7851782330345656,5.866237070938216) circle (1.5pt);
\draw [fill=black] (3.1989413572343097,2.513047139588101) circle (1.5pt);
\draw [fill=black] (-1.4852135723431552,1.5001043478260867) circle (1.5pt);
\end{scriptsize}
\end{tikzpicture}
\begin{center}
\noindent  Figure 9: In the case(2) when $d=5=2k_1(j-1)+2k_2+1=2\times 0\times (4-1)+2\times 2 +1$
\end{center}

\vspace{8pt}
\noindent It should be noted that the vertex sets of part (b) and part (c)
are disjoint. Therefore, $v_{i,l}$ will be either adjacent the vertices corresponding
to part (a) and part (b) or (a) and part (c) according to whether
$k_2 \le \frac{j-2}{2}$ or $ k_2 \ge  \frac{j}{2}$ respectively. As
done before, in all these scenarios we get $d = 2k_1(j- 1) + 2k_2+1$ as required.

\vspace{12pt}

\noindent \textbf{Case 3)} If $j$ is odd and $s$ is even.
\vspace{8pt}

\noindent  If $d=2k_1(j-1)+2k_2$ for some non negative integers $k_1$ and $k_2$ such that $2k_1 \le s-2$ and $0<k_2 \le j-1$. Construct  a graph by connecting the vertices $v_{i,l}$ and $v_{p,r}$ if one of the following situations hold

\vspace{4pt}
\noindent \textbf{a)} If $r \in B_{k_1,s}(l)$ and $p \neq i$.

\vspace{4pt}

\noindent \textbf{b)}  If $k_2 < \frac{j}{2}$, $r=l$ and $p \in B_{k_2,j}(i)$

\vspace{4pt}

\noindent \textbf{c)} If $k_2 \ge \frac{j}{2}$ and  (($r=l$ and $p\neq i$) or ($r \in \sigma_{\frac{s}{2},s}(l)$ and $p \in B_{\frac{2k_2-(j-1)}{2},j}(i)$)). 

\vspace{8pt}

\noindent It should be noted that the vertex sets of part (b) and part (c)
are disjoint. Therefore,  $v_{i,l}$ will be either adjacent the vertices corresponding
to part (a) and part (b) or (a) and part (c) according to whether
$k_2<\frac{j}{2}$ or $ k_2 \ge  \frac{j}{2}$ respectively. As
done before, in all these scenarios we get $d = 2k_1(j- 1) + 2k_2$ as required.

\vspace{12pt}

\noindent \textbf{Case 4)} If $j$ is odd and $s$ is even.
\vspace{8pt}

\noindent If $d=2k_1(j-1)+2k_2+1$ for some non negative integers $k_1$ and $k_2$ such that $2k_1 \le s-2$ and $0 \le k_2 < j-1$. Construct  a graph by connecting the vertices $v_{i,l}$ and $v_{p,r}$ if one of the following situations hold

\vspace{8pt}
\noindent \textbf{a)}  If $r \in B_{k_1,s}(l)$ and $p \neq i$.

\vspace{8pt}

\noindent \textbf{b)}  If $k_2 < \frac{j-1}{2}$ and  (($r=l$ and $p \in B_{k_2,j}(i)$) or ($r \in \sigma_{\frac{s}{2},s}(l)$ and $r>l$ and $p=\sigma^{+}_{k_2+1,j}(i)$) or ($r \in \sigma_{\frac{s}{2},s}(l)$ and $r<l$ and $p=\sigma^{-}_{k_2+1,j}(i)$)).

\vspace{8pt}

\noindent \textbf{c)}  If $k_2 = \frac{j-1}{2}$and  (($r=l$ and $p\neq i$) or ($r \in \sigma_{\frac{s}{2},s}(l)$ and $r>l$ and $p=\sigma^{+}_{k_2+1,j}(i)$) or ($r \in \sigma_{\frac{s}{2},s}(l)$ and $r<l$ and $p=\sigma^{-}_{k_2+1,j}(i)$)).

\vspace{8pt}

\noindent \textbf{d)}  If $k_2 > \frac{j-1}{2}$ and (($r=l$ and $p\neq i$) or ($r \in \sigma_{\frac{s}{2},s}(l)$  and  $p \in B_{\frac{2k_2-(j-1)}{2},j}(i)$) or  ($r \in \sigma_{\frac{s}{2},s}(l)$ and $p=\sigma^{+}_{\frac{2k_2-(j-1)}{2}+1,j}(i)$ with $r>l$ ) or  ($r \in \sigma_{\frac{s}{2},s}(l)$  and  $p=\sigma^{-}_{\frac{2k_2-(j-1)}{2}+1,j}(i)$  with $r<l$)).

\vspace{8pt}

\begin{tikzpicture}[line cap=round,line join=round,>=triangle 45,x=1.0cm,y=1.0cm]
\clip(-4.547799231754154,0.39052448512585997) rectangle (6.200207170294508,5.711384897025177);
\draw (0.017122663252250644,0.7863871853546925) node[anchor=north west] {$v_{2,2}$};
\draw [rotate around={31.31940798555165:(-2.3930879641485308,4.186149199084669)}] (-2.3930879641485308,4.186149199084669) ellipse (1.8193170789947273cm and 0.5790787938258221cm);
\draw [rotate around={-39.71631220657896:(4.036979257362354,4.110469565217392)}] (4.036979257362354,4.110469565217392) ellipse (1.7494760444172601cm and 0.7202335067485587cm);
\draw (5.280409475032023,4.209435240274604) node[anchor=north west] {$v_{1,2}$};
\draw (-3.321402304737508,5.094304805491995) node[anchor=north west] {$v_{0,2}$};
\draw (1.1583531370038518,0.7863871853546925) node[anchor=north west] {$v_{2,1}$};
\draw (4.752377464788745,4.814872311212819) node[anchor=north west] {$v_{1,1}$};
\draw (-4.053833802816894,4.616940961098402) node[anchor=north west] {$v_{0,1}$};
\draw (2.3847500640204977,0.7980302059496582) node[anchor=north west] {$v_{2,0}$};
\draw (4.088079129321395,5.385380320366138) node[anchor=north west] {$v_{1,0}$};
\draw (-4.4966993597951275,4.093005034324946) node[anchor=north west] {$v_{ 0,0}$};
\draw [rotate around={-0.24175315415687906:(0.6984542893726006,1.3801812356979415)}] (0.6984542893726006,1.3801812356979415) ellipse (2.828566326316242cm and 0.621612178572758cm);
\draw (0.3066886043533933,1.3336091533180794)-- (-2.657103969270166,4.116291075514878);
\draw (0.3066886043533933,1.3336091533180794)-- (3.8155464788732396,4.407366590389021);
\draw (-2.657103969270166,4.116291075514878)-- (3.1853147247119082,4.954588558352407);
\draw (-2.657103969270166,4.116291075514878)-- (4.2924786171574905,3.813572540045771);
\draw (-2.657103969270166,4.116291075514878)-- (2.333650192061459,1.2753940503432515);
\draw (1.396819206145967,1.25210800915332)-- (-3.236235851472471,3.673856292906182);
\draw (1.396819206145967,1.25210800915332)-- (-1.9587390524967987,4.616940961098403);
\draw (1.396819206145967,1.25210800915332)-- (3.1853147247119082,4.954588558352407);
\draw (1.396819206145967,1.25210800915332)-- (4.2924786171574905,3.813572540045771);
\draw (3.8155464788732396,4.407366590389021)-- (-1.9587390524967987,4.616940961098403);
\draw (3.8155464788732396,4.407366590389021)-- (-3.236235851472471,3.673856292906182);
\draw (3.8155464788732396,4.407366590389021)-- (2.333650192061459,1.2753940503432515);
\draw (-1.2092742637643954,0.8096732265446239) node[anchor=north west] {$v_{2,3}$};
\draw (-2.4016046094750236,5.571668649885589) node[anchor=north west] {$v_{0,3}$};
\draw (5.5529421254801665,3.6272842105263194) node[anchor=north west] {$v_{1,3}$};
\draw (-3.236235851472471,3.673856292906182)-- (4.769410755441742,3.371137757437075);
\draw (-1.9587390524967987,4.616940961098403)-- (4.769410755441742,3.371137757437075);
\draw (-1.328507298335467,4.942945537757442)-- (3.1853147247119082,4.954588558352407);
\draw (-1.328507298335467,4.942945537757442)-- (4.2924786171574905,3.813572540045771);
\draw (-1.328507298335467,4.942945537757442)-- (0.3066886043533933,1.3336091533180794);
\draw (-1.328507298335467,4.942945537757442)-- (2.333650192061459,1.2753940503432515);
\draw (-3.236235851472471,3.673856292906182)-- (-0.9878414852752878,1.3452521739130456);
\draw (-0.9878414852752878,1.3452521739130456)-- (-1.9587390524967987,4.616940961098403);
\draw (-0.9878414852752878,1.3452521739130456)-- (3.1853147247119082,4.954588558352407);
\draw (-0.9878414852752878,1.3452521739130456)-- (4.2924786171574905,3.813572540045771);
\draw (4.769410755441742,3.371137757437075)-- (0.3066886043533933,1.3336091533180794);
\draw (2.333650192061459,1.2753940503432515)-- (4.769410755441742,3.371137757437075);
\draw (-3.236235851472471,3.673856292906182)-- (4.2924786171574905,3.813572540045771);
\draw (2.333650192061459,1.2753940503432515)-- (-1.9587390524967987,4.616940961098403);
\draw (-2.657103969270166,4.116291075514878)-- (4.769410755441742,3.371137757437075);
\draw (3.1853147247119082,4.954588558352407)-- (0.3066886043533933,1.3336091533180794);
\draw (3.8155464788732396,4.407366590389021)-- (-0.9878414852752878,1.3452521739130456);
\draw (1.396819206145967,1.25210800915332)-- (-1.328507298335467,4.942945537757442);
\begin{scriptsize}
\draw [fill=black] (-1.9587390524967987,4.616940961098403) circle (1.5pt);
\draw [fill=black] (-3.236235851472471,3.673856292906182) circle (1.5pt);
\draw [fill=black] (0.3066886043533933,1.3336091533180794) circle (1.5pt);
\draw [fill=black] (4.2924786171574905,3.813572540045771) circle (1.5pt);
\draw [fill=black] (3.1853147247119082,4.954588558352407) circle (1.5pt);
\draw [fill=black] (3.8155464788732396,4.407366590389021) circle (1.5pt);
\draw [fill=black] (1.396819206145967,1.25210800915332) circle (1.5pt);
\draw [fill=black] (-2.657103969270166,4.116291075514878) circle (1.5pt);
\draw [fill=black] (2.333650192061459,1.2753940503432515) circle (1.5pt);
\draw [fill=black] (4.769410755441742,3.371137757437075) circle (1.5pt);
\draw [fill=black] (-1.328507298335467,4.942945537757442) circle (1.5pt);
\draw [fill=black] (-0.9878414852752878,1.3452521739130456) circle (1.5pt);
\end{scriptsize}
\end{tikzpicture}
\begin{center}
\noindent  Figure 10: In the case(3) when  $d=5=2k_1(j-1)+2k_2+1=2\times 1\times (3-1)+2\times 0 +1$
\end{center}

\vspace{8pt}

\noindent It should be noted that  the vertex sets of part (b), (c) and part (d)
are disjoint. Therefore,  $v_{i,l}$ will be either adjacent the vertices corresponding
to part (a) and part (b) or (a) and part (c) or else (a) and part (d) according to whether
$k_2<\frac{j-1}{2}$, $k_2 =  \frac{j-1}{2}$ or $ k_2 >  \frac{j-1}{2}$ respectively. As
done before, in all these scenarios we get $d = 2k_1(j- 1) + 2k_2 + 1$ as required.
\end{proof}

\vspace{5pt}

\begin{lemma}
\label{l1}
\noindent $m_j(S_n, S_{m}) \le  \left\lceil \dfrac{n+m-3}{j-1} \right\rceil$ where $j,n,m\ge3$.
\end{lemma}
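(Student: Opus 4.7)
The plan is to prove the upper bound by a simple degree-counting argument based on the structure of stars. Recall that $K_{j\times s}$ contains a copy of $S_n = K_{1,n-1}$ in red (resp.\ blue) if and only if some vertex has red degree at least $n-1$ (resp.\ blue degree at least $m-1$). So the statement will follow if I can show that for $s = \left\lceil \tfrac{n+m-3}{j-1} \right\rceil$, every red-blue coloring $K_{j\times s}=H_R\oplus H_B$ produces a vertex of red degree $\ge n-1$ or a vertex of blue degree $\ge m-1$.

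First I would note that each vertex $v \in V(K_{j\times s})$ is adjacent to every vertex outside its own part, so its total degree in $K_{j\times s}$ is exactly $(j-1)s$. Since this total splits between the two color classes, $d_R(v)+d_B(v) = (j-1)s$ for every $v$.

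Next I argue by contradiction: suppose $K_{j\times s}$ contains no red $S_n$ and no blue $S_m$. Then every vertex satisfies $d_R(v)\le n-2$ and $d_B(v)\le m-2$, and summing these bounds gives
\[
(j-1)s \;=\; d_R(v)+d_B(v) \;\le\; (n-2)+(m-2) \;=\; n+m-4.
\]
However, by the choice $s = \left\lceil \tfrac{n+m-3}{j-1} \right\rceil$ we have $(j-1)s \ge n+m-3$, contradicting the previous display. Therefore $K_{j\times s}\rightarrow (S_n,S_m)$, and the upper bound $m_j(S_n,S_m)\le s$ follows.

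There is really no hard step here, since the stars reduce the Ramsey property to a purely local degree condition and $K_{j\times s}$ is regular. The intricate constructions of regular subgraphs from Lemmas \ref{lemma 1} and \ref{lemma 2} are not needed for this direction; they will instead be required to produce good colorings certifying the matching lower bound.
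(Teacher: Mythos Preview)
Your proof is correct and follows essentially the same degree-counting argument as the paper: both exploit that every vertex has total degree $(j-1)s \ge n+m-3$, so the red and blue degrees cannot simultaneously be bounded by $n-2$ and $m-2$. The paper phrases it directly (assume no red $S_n$, then $d_B(v)\ge (j-1)s-(n-2)\ge m-1$) rather than by contradiction, but the content is identical.
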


\begin{proof}
Consider any red/blue coloring given by $K_{j \times s} =H_R \oplus H_B$, where  $s= \left\lceil \dfrac{n+m-3}{j-1}\right\rceil$, such that $H_R$ contains no red $S_n$.  Let $v$ be any vertex of $K_{j \times s}$. Then $v$ is  incident to at most $n-2$ red edge. Hence, \[d_B(v) \ge \left\lceil \dfrac{n+m-3}{j-1} \right\rceil (j-1) -(n-2) \ge m-1\] Therefore, $H_B$ will contain a blue $S_m$. Hence the result.
\end{proof} 

\vspace{5pt}

\begin{lemma}
\label{l2}
\noindent $m_j(S_n, S_{m}) \ge  \left\lceil \dfrac{n+m-4}{j-1} \right\rceil$ where $j,n,m\ge3$.
\end{lemma}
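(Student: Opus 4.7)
The plan is to build, for $s = \left\lceil \frac{n+m-4}{j-1}\right\rceil - 1$, a red-blue coloring of $K_{j\times s}$ that contains no red $S_n$ and no blue $S_m$; such a coloring certifies $m_j(S_n,S_m) > s$, which is the desired lower bound. Because $S_n = K_{1,n-1}$ is a star, the absence of a red $S_n$ is equivalent to $d_R(v) \le n-2$ for every vertex $v$, and likewise no blue $S_m$ means $d_B(v) \le m-2$.

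I will seek a spanning $d$-regular subgraph $H_R$ of $K_{j\times s}$, color its edges red, and color the remaining edges of $K_{j\times s}$ blue. Since every vertex of $K_{j\times s}$ has total degree $s(j-1)$, the no-star conditions translate into the single two-sided inequality
\[ s(j-1) - (m-2) \;\le\; d \;\le\; n-2. \]
By the definition of $s$ we have $s(j-1) < n+m-4$, and since both quantities are integers this strengthens to $s(j-1) \le n+m-5$. Consequently the interval of admissible values of $d$, namely $[\max\{0,\,s(j-1)-m+2\},\, n-2]$, contains at least two consecutive integers, so a feasible $d$ exists and can be chosen of either parity.

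The final step is to invoke the existence results of Section 3. If $j$ is even or $s$ is even, Lemma \ref{lemma 2} supplies a $d$-regular spanning subgraph of $K_{j\times s}$ for any admissible $d$, so any $d$ in the above interval works. If $j$ and $s$ are both odd, Lemma \ref{lemma 1} requires $d$ to be even; since the interval contains at least two consecutive integers, it contains an even one, which we select and feed into Lemma \ref{lemma 1}. In either scenario $H_R$, together with its complement in $K_{j\times s}$, gives the required coloring and completes the proof.

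The main (mild) obstacle is the parity restriction in Lemma \ref{lemma 1} in the doubly-odd case, and the key numerical fact that neutralizes it is the bound $s(j-1) \le n+m-5$, which is strict enough to guarantee the admissible interval for $d$ contains an even integer. The degenerate case $s = 0$, should it occur (when $n+m-4 \le j-1$), requires no construction, since $K_{j\times 0}$ has no edges and hence trivially contains no stars.
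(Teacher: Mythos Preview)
Your proof is correct and follows the paper's approach: exhibit a $d$-regular red spanning subgraph of $K_{j\times s}$ with $s=\lceil (n+m-4)/(j-1)\rceil-1$ and $d\in\{n-3,n-2\}$, then verify that neither color contains the forbidden star. You are in fact more careful than the paper, which cites only Lemma~\ref{lemma 2} and omits the explicit appeal to Lemma~\ref{lemma 1} in the doubly-odd case that you handle via the parity argument.
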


\begin{proof}
Consider the red and blue coloring of $K_{j \times s}$ given by $K_{j \times s} =H_R \oplus H_B$, where  $s= \left\lceil \dfrac{n+m-4}{j-1}\right\rceil-1$, where all the vertices will have uniform red degree of $n-2$ or $n-3$ (this is possible by lemma \ref{lemma 2}). Then clearly $H_B$ does not contain a red $S_n$.  Let $v$ be any vertex of $K_{j \times s}$. Then, 

\vspace{6pt}

$d_B(v) = \left(\left\lceil \dfrac{n+m-4}{j-1}\right\rceil -1 \right) (j-1) -(n-3)$ 

     $ $ $ $ $ $ $ $ $ $ $ $ $ $ $ $ $ =\left\lceil \dfrac{n+m-4}{j-1}\right\rceil (j-1)-j+1-n+3$ 

     $ $ $ $ $ $ $ $ $ $ $ $ $ $ $ $ $\ge n+m-4-j-n+4\ge m-j$

\vspace{6pt}

\noindent Therefore, $H_B$ will not contain a blue $S_m$. Hence the result.
\end{proof} 

\vspace{6pt}

\begin{lemma}
\label{l3}
\noindent $m_j(S_n, S_{m}) =  \left\lceil \dfrac{n+m-4}{j-1} \right\rceil$ if $ (n+m-4) \neq 0$ mod $(j-1)$ where $j,n,m\ge3$.
\end{lemma}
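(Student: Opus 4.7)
The plan is to observe that this statement is an immediate corollary of the two preceding lemmas together with an elementary fact about the ceiling function. Lemma~\ref{l1} provides the upper bound $m_j(S_n, S_m) \le \lceil (n+m-3)/(j-1)\rceil$ and Lemma~\ref{l2} provides the lower bound $m_j(S_n, S_m) \ge \lceil (n+m-4)/(j-1)\rceil$, so it suffices to verify that under the divisibility hypothesis these two ceilings coincide.

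To carry this out, I would write $n+m-4 = q(j-1)+r$ with $0 \le r \le j-2$. The hypothesis $(n+m-4)\not\equiv 0 \pmod{j-1}$ translates to $1 \le r \le j-2$, so that
\[
\left\lceil \frac{n+m-4}{j-1}\right\rceil = q+1.
\]
On the other hand, $n+m-3 = q(j-1)+(r+1)$ where $2 \le r+1 \le j-1$, and therefore
\[
\left\lceil \frac{n+m-3}{j-1}\right\rceil = q+1
\]
as well. Chaining the two inequalities then pins $m_j(S_n,S_m)$ to the common value $\lceil (n+m-4)/(j-1)\rceil$.

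There is essentially no obstacle here beyond keeping the ceiling arithmetic straight; all of the combinatorial work — in particular the construction of the uniform-degree coloring underlying the lower bound — has already been done in Lemma~\ref{lemma 2} and absorbed into Lemma~\ref{l2}. The only case excluded by the hypothesis, namely $(j-1)\mid(n+m-4)$, is precisely the one in which the two ceilings differ by one and a separate argument (presumably the subject of the next result) is required to decide which of the two values is correct.
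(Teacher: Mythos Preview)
Your proof is correct and follows exactly the paper's approach: combine the upper bound from Lemma~\ref{l1} with the lower bound from Lemma~\ref{l2}, and observe that when $(j-1)\nmid (n+m-4)$ the two ceilings $\lceil (n+m-4)/(j-1)\rceil$ and $\lceil (n+m-3)/(j-1)\rceil$ coincide. Your explicit quotient--remainder verification of this last fact is a bit more detailed than the paper's one-line assertion, but the argument is the same.
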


\begin{proof}
We know that if $(n+m-4)\neq 0$ mod $(j-1)$ then $\left\lceil \dfrac{n+m-4}{j-1} \right\rceil=\left\lceil \dfrac{n+m-3}{j-1} \right\rceil$. Hence the result follows by lemma \ref{l1} and lemma \ref{l2}.

\end{proof}

\vspace{6pt}

\begin{lemma}
\label{l4}
\noindent Suppose that $j,n,m\ge3$. Then, $m_j(S_n, S_{m}) \le  \left\lceil \dfrac{n+m-4}{j-1} \right\rceil$ provided that $ (n+m-4) = 0$ mod $(j-1)$ with $j$ is odd, $n$ is odd and $s=\dfrac{n+m-4}{j-1}$ is odd.
\end{lemma}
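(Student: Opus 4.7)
The plan is to argue by contradiction, mirroring the degree count used in Lemmas~\ref{l1} and \ref{l2} but pushing it through a parity step. Suppose there exists a red/blue coloring $K_{j \times s} = H_R \oplus H_B$, with $s = \dfrac{n+m-4}{j-1}$, such that $H_R$ contains no red $S_n$ and $H_B$ contains no blue $S_m$. To avoid a red $S_n$ we must have $d_R(v) \le n-2$ for every vertex $v$, and likewise $d_B(v) \le m-2$. Every vertex of $K_{j\times s}$ has total degree exactly $(j-1)s = n+m-4$, so $d_R(v) + d_B(v) = n+m-4$ for every $v$. Since $(n-2)+(m-2) = n+m-4$ already, the two upper bounds must both be attained: $d_R(v) = n-2$ and $d_B(v) = m-2$ for every $v$. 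In particular, $H_R$ is an $(n-2)$-regular spanning subgraph of $K_{j\times s}$.

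The key step, and the only place where the hypotheses on parity are used, is ruling out such a regular subgraph by the handshake lemma. The sum of degrees in $H_R$ equals $js(n-2)$, which must be even. But $j$ odd, $s$ odd, and $n$ odd (so $n-2$ is odd) force $js(n-2)$ to be odd, a contradiction. Hence no such coloring exists, so every red/blue coloring of $K_{j\times s}$ contains a red $S_n$ or a blue $S_m$, which yields $m_j(S_n, S_m) \le s = \left\lceil \dfrac{n+m-4}{j-1}\right\rceil$.

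I do not expect any serious obstacle here: unlike Lemma~\ref{lemma 2}, which required explicit constructions of regular subgraphs to realize a given degree, the present lemma only needs to \emph{forbid} such a subgraph, and the parity obstruction does this directly. The only care needed is to verify that the equality $(n-2)+(m-2) = (j-1)s$ forces both degree bounds to be tight simultaneously, and that the stated parity hypotheses (odd $j$, odd $n$, odd $s$) together make $js(n-2)$ odd.
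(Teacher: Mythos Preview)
Your proof is correct and follows essentially the same approach as the paper: both arguments use the handshake lemma applied to the parity of $js(n-2)$ to rule out every vertex having red degree exactly $n-2$. The paper phrases it directly (some vertex must have $d_R(v)\le n-3$, hence $d_B(v)\ge m-1$), while you phrase it as a contradiction from assuming $H_R$ is $(n-2)$-regular, but the content is identical.
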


\vspace{6pt}

\begin{proof}
Consider any red/blue coloring given by $K_{j \times s} =H_R \oplus H_B$, where  $s= \left\lceil \dfrac{n+m-4}{j-1}\right\rceil$, such that $H_R$ contains no red $S_n$.  Since, $j \times s \times (n-2)$ is odd, there will exist at least one vertex $v \in K_{j \times s}$ such it is not incident to $n-2$ red edges, as otherwise by handshake lemma $j \times s \times (n-2)=2|E(H_R)|$, a contradiction. Hence, \[d_B(v) \ge \left\lceil \dfrac{n+m-4}{j-1} \right\rceil (j-1) -(n-3) \ge m-1\] Therefore, $H_B$ will contain a blue $S_m$. Hence the result.
\end{proof}  

\vspace{6pt}

\begin{lemma}
\label{l5}
\noindent Suppose that $j,n,m\ge3$. Then, $m_j(S_n, S_{m}) \ge  \left\lceil \dfrac{n+m-3}{j-1} \right\rceil$ provided that $ (n+m-4) = 0$ mod $(j-1)$ with $j$ is even or $s=\dfrac{n+m-4}{j-1}$ even or $n$ is even.

\end{lemma}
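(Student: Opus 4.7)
The plan is to exhibit an explicit 2-coloring of $K_{j \times s}$, where $s = (n+m-4)/(j-1)$, that contains neither a red $S_n$ nor a blue $S_m$; this will force $m_j(S_n,S_m) > s$. Since each vertex of $K_{j \times s}$ has total degree exactly $(j-1)s = n+m-4 = (n-2)+(m-2)$, a coloring avoids a monochromatic $S_n$ in red and a monochromatic $S_m$ in blue at every vertex if and only if every vertex has red degree exactly $n-2$ and blue degree exactly $m-2$. So the task reduces to producing an $(n-2)$-regular spanning subgraph $H$ of $K_{j \times s}$; coloring $E(H)$ red and the rest of $E(K_{j \times s})$ blue gives the desired coloring.

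To build $H$, I split according to the three (non-exclusive) clauses of the hypothesis. If $j$ is even, or if $s$ is even, then Lemma \ref{lemma 2} directly supplies an $(n-2)$-regular subgraph of $K_{j \times s}$ on the full vertex set, with no parity obstruction on $d = n-2$. The only remaining situation is $j$ odd and $s$ odd; under the hypothesis this forces $n$ to be even, so $d = n-2$ is even, and Lemma \ref{lemma 1} provides the required $(n-2)$-regular subgraph. In all three cases the resulting $H$ gives $d_R(v) = n-2 < n-1$ and $d_B(v) = m-2 < m-1$ for every vertex $v$, so no red $S_n$ and no blue $S_m$ can exist.

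It remains only to identify the ceiling on the right-hand side. Since $(n+m-4) \equiv 0 \pmod{j-1}$ and $j - 1 \ge 2$, we have $\lceil (n+m-3)/(j-1) \rceil = (n+m-4)/(j-1) + 1 = s+1$. Therefore $m_j(S_n,S_m) \ge s + 1 = \lceil (n+m-3)/(j-1) \rceil$, as claimed.

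The only conceptual point is noticing that the hypothesis disjunction is exactly calibrated to dodge the parity restriction of Lemma \ref{lemma 1} (which demands even degree when both $j$ and $s$ are odd). Once the reduction to regular subgraphs is in place, Lemmas \ref{lemma 1} and \ref{lemma 2} do all the real work, so no further computation is needed; the writeup is essentially just a case check that the correct lemma applies in each of the three hypothesis clauses.
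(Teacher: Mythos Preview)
Your proof is correct and follows essentially the same approach as the paper: construct an $(n-2)$-regular spanning subgraph of $K_{j\times s}$ with $s=(n+m-4)/(j-1)$, color it red, and verify the degree counts. Your case split between Lemma~\ref{lemma 1} and Lemma~\ref{lemma 2} is in fact more explicit than the paper's version, which simply asserts the existence of the regular subgraph without separating out which construction lemma applies in each clause of the hypothesis.
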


\begin{proof}
By lemma \ref{l1} and lemma \ref{l2}, $K_{j \times s}$ where $s= \left\lceil \dfrac{n+m-3}{j-1}\right\rceil-1=\left\lceil \dfrac{n+m-4}{j-1}\right\rceil$, will have a $n-2$ regular subgraph on $K_{j \times s}$. Using this subgraph generate a red/blue coloring given by $K_{j \times s} =H_R \oplus H_B$,  where all the edges of this subgraph are colored red and all other edges colored blue. Then clearly $H_R$ is $S_n$ - free. Furthermore, for any vertex $v \in K_{j \times s}$,  $d_B(v) =\left( \dfrac{n+m-4}{j-1} \right) (j-1)-(n-2)  = m-2$. Therefore, $H_B$ will not contain a blue $S_m$. Hence the result.
\end{proof} 
\vspace{6pt}

\begin{theorem} 
If $j\ge3$ and $n,m\ge 2$ then, \[
m_j(S_n, S_m) = 
\begin{cases} 
\left\lceil \dfrac{\max \{n,m\}-1}{j-1} \right\rceil & \text{ if }  n=2 \text{ or }  m=2  \\
\hspace{14pt} & \\
\left\lceil \dfrac{n+m-4}{j-1} \right\rceil &  \text{ if }  n+m-4=(j-1)s;  j,s,n  \\                                        &                                  \text{ are odd and }        n,m \ge 3\\
\hspace{14pt} & \\
\left\lceil \dfrac{n+m-3}{j-1} \right\rceil & \text{ otherwise } \\
\end{cases}
\]
\end{theorem}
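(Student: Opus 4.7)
The plan is to deduce the theorem by assembling the five lemmas above after a brief case analysis on parity and divisibility.

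First, I would dispatch the degenerate case $n=2$ (the case $m=2$ is symmetric). Here $S_2 = K_{1,1}$ is just an edge, so $K_{j\times s}\to(S_2,S_m)$ is equivalent to saying that whenever $K_{j\times s}$ contains no red edge, the all-blue coloring yields a blue $S_m$. This forces the maximum degree $(j-1)s$ of $K_{j\times s}$ to be at least $m-1$, giving $m_j(S_2,S_m)=\lceil (m-1)/(j-1)\rceil$, matching the stated formula.

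Next, I assume $n,m\ge 3$ and split on whether $(j-1)$ divides $n+m-4$. If not, Lemma~\ref{l3} directly gives $m_j(S_n,S_m) = \lceil (n+m-4)/(j-1)\rceil$, and in this non-divisible regime that quantity equals $\lceil (n+m-3)/(j-1)\rceil$, which is precisely the ``otherwise'' case of the theorem. If instead $(n+m-4)=(j-1)s$ for an integer $s$, I branch again on parity. When $j$, $s$, and $n$ are all odd, Lemma~\ref{l4} gives the upper bound $m_j \le s$ and Lemma~\ref{l2} gives the matching lower bound, so $m_j = s = \lceil (n+m-4)/(j-1)\rceil$, i.e.\ the middle case. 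When at least one of $j$, $s$, $n$ is even, Lemma~\ref{l5} provides the lower bound $m_j \ge \lceil (n+m-3)/(j-1)\rceil = s+1$, which pairs with the universal upper bound of Lemma~\ref{l1} to yield $m_j = s+1$, again the ``otherwise'' value.

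The bulk of the work has already been done in Lemmas~\ref{lemma 1} and~\ref{lemma 2}, whose explicit constructions of $(n-2)$-regular induced subgraphs of $K_{j\times s}$ drive the lower-bound colorings invoked by Lemmas~\ref{l2} and~\ref{l5}; the present theorem itself is essentially bookkeeping. The only point requiring mild care is that the parity hypotheses of Lemmas~\ref{l4} and~\ref{l5} partition the divisible case $\{(n+m-4)\equiv 0 \pmod{j-1}\}$ exhaustively into ``$j,s,n$ all odd'' versus ``at least one of $j,s,n$ even,'' and that in the all-odd branch $\lceil(n+m-4)/(j-1)\rceil$ and $\lceil(n+m-3)/(j-1)\rceil$ differ by exactly one. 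Both observations are immediate. The main obstacle, conceptually, was the parity-sensitive construction of uniform subgraphs handled in Lemma~\ref{lemma 2}; assembling it here is routine.
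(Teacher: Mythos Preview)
Your proposal is correct and follows essentially the same route as the paper: the paper's proof is the single sentence ``the theorem clearly follows from Lemmas~\ref{l3}, \ref{l4} and~\ref{l5} as $m_j(S_2,S_m)=\lceil (m-1)/(j-1)\rceil$,'' and you have simply unpacked this into the explicit case split, correctly noting that Lemmas~\ref{l1} and~\ref{l2} are also needed to close the divisible subcases (a point the paper leaves implicit).
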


\begin{proof}
The theorem clearly follows from lemmas \ref{l3}, \ref{l4} and \ref{l5} as $m_j(S_2,S_m)=\left\lceil \dfrac{m-1}{j-1} \right\rceil$ (see Syafrizal et al 2005).
\end{proof}


\begin{thebibliography}{5} 

\bibitem{Buger2004s}  Burger A.P. and  van Vuuren J.H. (2004). Ramsey numbers in Complete Balanced Multipartite Graphs. Part II: Size Numbers. \textit{Discrete Mathematics}, \textbf{283}:  45-49.


\bibitem{Erdos1978s} Erd{\"o}s P.,  Faudree R.J. and Schelp R.H. (1978). The size Ramsey numbers, \textit{Period Mathematics Hungary}, \textbf{19}: 161-173.

\bibitem{Faudree1975}  Faudree R.J. and Schelp R.H. (1975). Path-path Ramsey-type numbers for the complete bipartite graph, \textit{Journal of Combinatorial Theory (B).}, \textbf{19}: 161-173.

\bibitem{Radziszowski2014}  Radziszowski S.P. (2014). Small Ramsey numbers, \textit{Electronic Journal of Combinatorics}, (rev \textbf{14}), DS1.

\bibitem{Syafrizal2005s} Syafrizal Sy,  Baskoro E.T. and  Uttunggadewa S. (2005). The size multipartite Ramsey number for paths, \textit{Journal Combinatorial Mathematics and Combinatorial Computing} \textbf{55}:  103-107.

\bibitem{Syafrizal2009s} Syafrizal Sy,  Baskoro E.T.,  Uttunggadewa S.and  Assiyatun H.(2009). Path-path
size multipartite Ramsey numbers, \textit{Journal Combinatorial Mathematics and Combinatorial Computing} \textbf{71}: 265-271.



\end{thebibliography}
\end{document}